	\newtheorem{thm}{Theorem}[section]
\theoremstyle{definition}
\newtheorem{defn}[thm]{Definition}
\newtheorem{prop}[thm]{Proposition}
\newtheorem{lem}[thm]{Lemma}
\newtheorem{rem}[thm]{Remark}
\newtheorem{cor}[thm]{Corollary}
\newcommand{\s}{\mathfrak{sl} _2(\mathbb{C})}
\newcommand{\zzz}{ \mathbb{Z} }
\newcommand{\cc}{ \mathbb{C} }
\newcommand{\aaa}{ \mathbb{A} }
\newcommand{\ii}{ I_{\lambda}}
\newcommand{\ui}{ U(I_{\lambda}) }
\newcommand{\h}{ {h} }
\newcommand{\B}{ {B} }
\newcommand{\mc}{ M_{\lambda}^{C} }
\newcommand{\nc}{ N_{\lambda}^{C} }
\newcommand{\wc}{ P }
\newcommand{\wcc}{ Q }
\newcommand{\wh}[1]{\widehat{#1}}
\newcommand{\ld}{\ldots}
\newcommand{\beq}{\begin{equation}}
\newcommand{\eeq}{\end{equation}}
\newcommand{\beas}{\begin{eqnarray*}}
\newcommand{\eeas}{\end{eqnarray*}}
\newcommand{\id}{\mathrm{id}}
\newcommand{\C}{\mathbb{C}}
\newcommand{\z}{\mathbb{Z}_2}
\newcommand{\ZZ}{\mathbb{Z}}
\newcommand{\CC}{\mathbb{C}}
\newcommand{\FF}{\mathbb{F}}
\DeclareMathOperator{\Supp}{\mathrm{Supp}}
\newcommand{\la}{\langle}
\newcommand{\ra}{\rangle}
\newcommand{\Sl}{\mathfrak{sl}}
\newcolumntype{L}{>{$}l<{$}}
\begin{document}

\title{\sc{Graded torsion-free $ \s $-modules of rank 2}}

\author[Bahturin]{Yuri Bahturin}
\address{Department of Mathematics and Statistics, Memorial
University of Newfoundland, St. John's, NL, A1C5S7, Canada}
\email{bahturin@mun.ca}

\author[Shihadeh]{Abdallah Shihadeh}
\address{Department of Mathematics and Statistics, Memorial
University of Newfoundland, St. John's, NL, A1C5S7, Canada}
\email{aaks47@mun.ca}

\thanks{{\em Keywords:} graded Lie algebras, enveloping algebras, simple modules, graded modules}
\thanks{{\em 2010 Mathematics Subject Classification:} Primary 17B70, Secondary 17B10, 17B20, 17B35.}
\thanks{The first author acknowledges support by NSERC grant \# 227060-14. The second author acknowledges support by NSERC grant \# 227060-14 and NSERC grant \# -18.}

\begin{abstract} In this paper we explore the possibility of endowing simple infinite-dimensional $\s$-modules by the structure of the graded module. The gradings on finite-dimensional simple module over simple Lie algebras has been studied in \cite{EK_Israel} and \cite{EK_loop}.

\end{abstract}

\maketitle

\section{Introduction}\label{sI}

Let $G$ be a non-empty set. A  vector space $V$ over a field $\FF$ is called $G$-\textit{graded} if it can be written as the  direct sum of subspaces indexed by $G$, as follows: 
\begin{equation}\label{eI1}
V=\bigoplus_{g\in G}V_g.
\end{equation}

We will sometimes use Greek letters to refer to the gradings, for example, 
\[
\Gamma: V=\bigoplus_{g\in G}V_g.
\]

Generally, it is allowed that some of $V_g$ are zero subspaces. The subset $S\subset G$ consisting of those $g\in G$ for which $V_g\ne\{ 0\}$ is called the \textit{support} of the grading $\Gamma$. One writes $S=\Supp \Gamma$ or $S=\Supp V$. The subspaces $V_g$ are called the \textit{ homogeneous components} of $\Gamma$. Nonzero elements in $V_g$ are called \textit{homogeneous of degree} $g$. An $\FF$-subspace $U$ in $V$ is called \textit{graded} in $V$ (or in $\Gamma$) if $U=\bigoplus_{g\in G}U\cap V_g$. 

Now let $ \Gamma $ and $ \Gamma^{\prime}: V=\bigoplus_{g'\in G'}V^\prime_{g'} $ be two gradings on $ V $ with supports $ S $  and $ S' $, respectively. We say that $ \Gamma $ is a \emph{refinement} of $ \Gamma^{\prime} $ (or $ \Gamma^{\prime} $ is a \emph{coarsening} of $ \Gamma $), if for any $ s\in S $ there exists $ s'\in S' $ such that $ V_{s}\subseteq V^{\prime}_{s'} $. The refinement is \emph{proper} if this inclusion is strict for at least one $ s\in S $. A grading is called \emph{fine} if it does not have proper refinements.

An $\FF$-algebra $A$ (not necessarily associative) is said to be \emph{graded by a set $G$}, or \emph{$G$-graded} if $A$ is a $G$-graded vector space and for any $g,h\in G$ such that $A_gA_h\ne\{ 0\}$ there is $k\in G$ such that    
\begin{equation}\label{eg1}
A_g A_h\subset A_k.
\end{equation} 
In this paper, we will always assume that $G$ is an \emph{abelian group} and $k$ in Equation \eqref{eg1} is determined by the operation of $G$. Thus, if $G$ is written additively (as is commonly done in the papers on Lie theory), then Equation \eqref{eg1} becomes $A_g A_h\subset A_{g+h}$. If $G$ is written multiplicatively, then it becomes $A_g A_h\subset A_{gh}$.

	Given a grading $\Gamma: A=\bigoplus_{g\in G}A_g$ with support $S$, the \textit{universal (abelian) group} of $\Gamma$, denoted by $G^u$, is the abelian group given in terms of generators and defining relations as follows: $G^u=(S\:\vert\:R)$, where $gh=k$ is a relation in $R$ if $\{ 0\}\ne A_gA_h\subset A_k$. If $\Gamma$ is a group grading then the identity map $\id_S$ extends to a homomorphism of $G^u$ to $G$ so that $\Gamma$ can be viewed as a $G^u$-grading. Any coarsening $\Gamma':A=\bigoplus_{g'\in G'}A^\prime_{g'} $ of $\Gamma$ is a quotient-grading of this $G^u$-grading of $A$ in the sense that there exists a unique homomorphism $ \nu:G^u\to G' $ such that $  A_{g'}=\bigoplus_{g\in\nu^{-1}(g') }A_g $, for any $g'\in G'$. 
	
	For example, consider the standard basis of $L=\Sl_2(\C)$ 
	 \begin{equation*}\label{sl2}
	x=\begin{bmatrix}
	0 &1 \\
	0 &0
	\end{bmatrix},\quad h=
	\begin{bmatrix}
	1 &0 \\
	0 &-1
	\end{bmatrix},\quad y=
	\begin{bmatrix}
	0 &0 \\
	1 &0 
	\end{bmatrix}.
	\end{equation*}
   and give $L$ the following  grading by $G=\ZZ_3$:
\[
\Gamma:\; \;L_0=\la h\ra, \;L_1=\la x\ra, \;L_2=\la y\ra.
\]
The support is $G$ itself.

The following grading by $G'=\ZZ_2$ is a coarsening of $ \Gamma $ : 
\[
\Gamma':\; L_1=\la x,y\ra,\;L_0=\la h\ra.
\]
The universal group of $\Gamma$ is $\ZZ$, with
\[
\Gamma_0:\; L_{-1}=\la x\ra,\;L_0=\la h\ra,\;L_1=\la y\ra.
\]
Both $\Gamma$ and $\Gamma'$ are factor-gradings of $\Gamma_0$; $\Gamma'$ is a coarsening but not a factor-grading of $\Gamma$.

 A left module $M$ over a \emph{$G$-graded} associative algebra $A$ is called \textit{$G$-graded} if $M$ is a $G$-graded vector space and 
\[
A_g M_h\subset M_{g+h}\mbox{ for all }g, h\in G.
\]
(Later on, the multiplicative notation for the graded modules will be used, as well).

A left $G$-graded $A$-module $M$ is called \textit{graded-simple} if $M$ has no graded submodules different from $\{ 0\}$ and $M$. Graded modules and graded-simple modules over a graded Lie algebra $L$ are defined in the same way.

If a Lie algebra $L$ is $G$-graded then its universal enveloping algebra $U(G)$ is also $G$-graded. Every graded $L$-module is a left $U(L)$-graded module, and \textit{vice versa}.  The same is true for simple graded $L$-modules and graded-simple $L$-modules.

An $ \s $-module is called \textit{torsion-free} if it is torsion-free as an $\CC[h]$-module. Every simple module is either torsion -free or torsion, also called \textit{weight} module. Torsion-free $ \s $-modules has been studied in \cite{Bavula classif,irr..rep..block,arb rank,Mazorchuk,u(h)struct}. In 1992 Bavula constructed a family of simple torsion-free modules \cite{Bav weyl}.  The torsion-free $ \s $-modules of rank $ 1 $ have been classified in \cite{u(h)struct}. In \cite{arb rank}, the author produced torsion-free modules of arbitrary finite rank. 

The gradings of the torsion-free $ \s $-module of rank $ 1  $ have been dealt with in \cite{BKS}; the main result was that the torsion-free $ \s $-modules of rank $ 1 $ cannot be $ \zzz$- or $ \zzz_{2}^{2} $-graded. In \cite{BKS} one also considered the gradings of the family of modules constructed in \cite{Bav weyl}.   

In this paper, our main goal is to construct a simple torsion-free $ \zzz_{2}^{2} $-graded module. For this, we consider the following module (see the notations in Sections \ref{ssGGS} and \ref{ssul}): 
\begin{equation*}
\mc:=\ui/\ui C,
\end{equation*}
where $ \lambda\in \cc $. The main result in this paper that the graded module $ \mc $ is simple when $ \lambda $ is not an even integer. Otherwise, the module $ \mc $ contains a proper graded simple torsion-free module of rank $ 2 $.
  
\section{Group gradings of $\Sl_2(\C)$-modules}\label{sggsl2}  In this section we restrict our attention to the modules over the Lie algebra of the type $A_1$, which can be realized as $ \s $.

\subsection{Group gradings of $ \s $}\label{ssGGS}

All group gradings on $\Sl_2(\C)$ are well-known, see e.g \cite{gradbook}. We will use the following bases:

\begin{equation}\label{sl2g}
x=\begin{bmatrix}
0 &1 \\
0 &0
\end{bmatrix},\quad h=\begin{bmatrix}
1 &0 \\
0 &-1
\end{bmatrix},\quad y=\begin{bmatrix}
0 &0 \\
1 &0 
\end{bmatrix},
\end{equation}
as earlier, and
\begin{equation}\label{sl20}
A=\begin{bmatrix}
1 &0 \\
0 &-1
\end{bmatrix}=h,\quad B=\begin{bmatrix}
0 &1 \\
1 &0
\end{bmatrix},\quad C=\begin{bmatrix}
0 &1 \\
-1 &0 
\end{bmatrix}.
\end{equation}  
 
 Up to equivalence, there are precisely two fine gradings on $ \s $ (see \cite[Theorem 3.55]{gradbook}):
 \begin{itemize}
		\item \emph{Cartan grading} with the universal group $\mathbb{Z}$,
		\begin{eqnarray*}
			\Gamma^1_{\Sl_2} :\mathfrak{sl}_2(\mathbb{C})&=&L_{-1}\oplus L_0\oplus L_1\\ \mbox{ where }L_0&=&\left\langle h \right\rangle,\, L_1=\left\langle x \right\rangle,\, L_{-1}=\left\langle  y\right\rangle;
			\end{eqnarray*}
		
		\item \emph{Pauli grading} with the universal group $\mathbb{Z}^2_2$, 
	\begin{eqnarray*}
			\Gamma^2_{\Sl_2} :\mathfrak{sl}_2(\mathbb{C})&=&L_{(1,0)}\oplus L_{(0,1)}\oplus L_{(1,1)}\\\mbox{ where }L_{(1,0)}&=&\left\langle A \right\rangle,\,L_{(0,1)}=\left\langle B \right\rangle,\, L_{(1,1)}=\left\langle C\right\rangle .
			\end{eqnarray*}
		\end{itemize}
	 Hence, up to isomorphism, any $ G $-grading on $ \s $  is a coarsening of one of the two gradings: Cartan or Pauli. 
	 
	 Note that any grading $\Gamma$ of a Lie algebra $L$ uniquely extends to a grading $U(\Gamma)$ of its universal enveloping algebra $U(L)$. The grading $U(\Gamma)$ is a grading in the sense of associative algebras but also as $L$-modules where $U(L)$ is either a (left) regular $L$-module or an adjoint $L$-module. In the study of gradings on $\Sl_2(\C)$-modules one often considers a $\ZZ_2$-coarsening of $U(\Gamma^2_{\Sl_2})$, in which the component of the coarsening labeled by $0$ is the sum of components of the original grading labeled by $(0,0)$ and $(1,0)$ while  the component labeled by $1$ is the sum of components labeled by $(0,1)$ and $(1,1)$.

\subsection{Algebras $U(I_\lambda)$}\label{ssul}
 Let $ c\in U(\s)$ be the Casimir element for $ \s $. With respect to the basis $ \left\lbrace h,x,y \right\rbrace  $ of $ \s $, this element can be written as
 \begin{equation}
 c=(h+1)^{2}+4yx=h^{2}+1+2xy+2yx.
 \end{equation} 
 It is will-known that the center of $ U({\s}) $ is the polynomial ring $\cc[c]  $. Note that $ c $ is a homogeneous element of degree zero, with respect to the Cartan grading of $U(\s)$. One can write the Casimir element with respect to the basis $ \left\lbrace h,B,C \right\rbrace $  of $ \s $. 
 
 Namely, 
 \begin{eqnarray*}
  c&=& 2xy+2yx+h^{2}+1=2\left(\frac{B+C}{2}\right)\left(\frac{B-C}{2}\right)\\
  &+&2\left(\frac{B-C}{2}\right)\left(\frac{B+C}{2}\right)+h^{2}+I\\
  &=&\frac{1}{2}(B^{2}+CB-BC-C^{2})+\frac{1}{2}(B^{2}+BC-CB-C^{2}) +h^{2}+I, 
 \end{eqnarray*}
 and so 
\[
 c=B^{2}-C^{2}+h^{2}+I=A^2+B^2-C^2+I.
\]
 It follows that $ c $ is also homogeneous, of degree (0,0), with respect to the Pauli grading of $U(\s)$. Note that this  basis of the center of the universal enveloping algebra is a very  particular case of  the bases given in \cite{BM}.
 
 Let $ R $ be an associative algebra (or just an associative ring), and $ V $ be a left $ R $-module. The annihilator of $ V $, denoted by $ \mathrm{Ann}_{R}(V) $, is the set of all elements $ r $ in $ R $ such that, for all $ v $ in $ V $, $ r.v=0: $ 
 \begin{center}
 	$\mathrm{Ann}_{R}(V)=\left\lbrace r\in R \mid r.v=0 \text{ for all } v\in V \right\rbrace $.
 \end{center} 
 Given $ \lambda\in \cc$, let $ I_{\lambda} $ be the two-side ideal of $ U( \s)$, generated by the central element $ c-(\lambda+1)^{2} $.
 
 \begin{thm}\cite[Theorem 4.7]{Mazorchuk}\label{ann of simple module}
 	For any simple $ U({\s}) $-module $ M $, there exists $ \lambda\in  \cc$ such that $ I_{\lambda}\subset\mathrm{Ann}_{U({\s})}(M) $.
 \end{thm}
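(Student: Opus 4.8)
The plan is to run the standard Dixmier--Schur argument, exploiting that $U(\s)$ has countable dimension over the uncountable, algebraically closed field $\cc$.

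First I would fix a nonzero vector $v\in M$. Since $M$ is simple, $M=U(\s)\cdot v$, so $\dim_{\cc}M\le\aleph_0$. Set $D:=\End_{U(\s)}(M)$. By Schur's lemma $D$ is a division ring, and it is a $\cc$-algebra because $\cc$ is central in $U(\s)$. The evaluation map $D\to M$, $\vphi\mapsto\vphi(v)$, is injective: the kernel of a nonzero $\vphi\in D$ is a proper submodule of a simple module, hence $\{0\}$, so $\vphi(v)\neq 0$. Therefore $\dim_{\cc}D\le\dim_{\cc}M\le\aleph_0$.

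Next I would show $D=\cc\,\id$. Let $\vphi\in D$. If $\vphi$ is algebraic over $\cc$, then the commutative subalgebra $\cc[\vphi]$ of the division ring $D$ is a finite field extension of the algebraically closed field $\cc$, hence $\vphi\in\cc\,\id$. If $\vphi$ were not in $\cc\,\id$, it would then be transcendental over $\cc$, so $\cc[\vphi]\cong\cc[t]$ and, $D$ being a division ring, the field $\cc(t)$ would embed into $D$; but the uncountable family $\{(\vphi-\alpha\,\id)^{-1}:\alpha\in\cc\}$ is linearly independent over $\cc$ (the usual partial-fraction argument), contradicting $\dim_{\cc}D\le\aleph_0$. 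Hence $D=\cc\,\id$. Finally, the Casimir element $c$ is central in $U(\s)$, so multiplication by $c$ on $M$ is a $U(\s)$-endomorphism, i.e. an element of $D=\cc\,\id$; thus $c$ acts on $M$ as a scalar $\mu\in\cc$. Since $\cc$ is algebraically closed, write $\mu=(\lambda+1)^{2}$ for some $\lambda\in\cc$. Then $c-(\lambda+1)^{2}$ annihilates $M$, and since $\ii=U(\s)\,(c-(\lambda+1)^{2})\,U(\s)$ and $M$ is a $U(\s)$-module, the whole ideal $\ii$ annihilates $M$, that is, $\ii\subseteq\mathrm{Ann}_{U(\s)}(M)$.

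The only non-formal step is the identification $D=\cc\,\id$ (Dixmier's lemma); the rest is bookkeeping. The point to be careful about is that $M$ need not be finite-dimensional — indeed the torsion-free modules of interest are infinite-dimensional — so one cannot simply diagonalize the action of $c$, and the cardinality argument is precisely what replaces finite-dimensional linear algebra here.
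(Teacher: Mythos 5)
Your argument is correct and is essentially the standard proof of this result: the paper itself gives no proof but cites Mazorchuk's book, where the statement is obtained in exactly this way, via the Dixmier--Schur lemma forcing the central Casimir element $c$ to act as a scalar $(\lambda+1)^2$ on the countable-dimensional simple module $M$. All the steps (countable dimension of $M$, injectivity of evaluation on $D=\End_{U(\s)}(M)$, the partial-fraction cardinality argument ruling out a transcendental endomorphism, and the passage from $c-(\lambda+1)^2$ annihilating $M$ to the whole ideal $\ii$ doing so by centrality) are sound.
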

 
 Clearly, if $ R $ be a graded algebra and $ M $ be a graded $ R $-module, then $\mathrm{Ann}_{R}(M)$ is a graded ideal. 

 \begin{prop}[\cite{BKS}]\label{I is graded}
 	The ideal $ I_{\lambda} $ is both  $ \ZZ- $ and $\z ^{2}- $graded ideal.
 \end{prop}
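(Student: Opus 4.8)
The plan is to deduce the statement from a single structural fact about graded associative algebras, so that the only concrete input is the homogeneity of the Casimir element $c$ in each of the two relevant gradings of $U(\s)$, which has already been established in Section \ref{ssul}.

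First I would record the auxiliary lemma: if $A=\bigoplus_{g\in G}A_g$ is a unital $G$-graded associative algebra over an abelian group $G$ and $z\in A_d$ is homogeneous, then the two-sided ideal $AzA$ generated by $z$ is a graded ideal. This is immediate once one notes that the identity $1$ of a group-graded unital algebra lies in the neutral component: every element of the ideal is a finite sum $\sum_i a_i z b_i$; decomposing each $a_i$ and $b_i$ into homogeneous components and applying $A_gA_h\subseteq A_{gh}$ twice gives $A_g\,z\,A_h\subseteq A_{gdh}$, whence $\sum_i a_iz b_i$ lies in the span of homogeneous elements and $AzA=\bigoplus_k\big(AzA\cap A_k\big)$.

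Next I would apply this with $A=U(\s)$ and $z=c-(\lambda+1)^2$. As recalled in Section \ref{ssGGS}, the Cartan grading of $\s$ extends uniquely to a $\ZZ$-grading of $U(\s)$ with $\deg h=0,\ \deg x=1,\ \deg y=-1$; since $c=h^2+1+2xy+2yx$, every monomial occurring in $c$ has degree $0$ and the scalar $(\lambda+1)^2\cdot 1$ lies in $U(\s)_0$, so $z$ is homogeneous of degree $0$. Likewise the Pauli grading extends to a $\z^2$-grading of $U(\s)$ with $\deg A=(1,0),\ \deg B=(0,1),\ \deg C=(1,1)$; since $c=A^2+B^2-C^2+I$, every monomial in $c$ has degree $(0,0)$ and the scalar term again lies in $U(\s)_{(0,0)}$, so $z$ is homogeneous of degree $(0,0)$. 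In either case the lemma gives that $\ii=U(\s)\,z\,U(\s)$ is a graded ideal.

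I do not expect a genuine obstacle here: the two expressions for $c$ in the bases $\{h,x,y\}$ and $\{h,B,C\}$ are already supplied before the statement, and everything else is routine bookkeeping. The only point I would be careful to state is the passage from a grading of the Lie algebra $\s$ to an honest group grading of the associative algebra $U(\s)$ — so that a product of homogeneous elements of $U(\s)$ is homogeneous with degrees multiplying in the group — since this is precisely what makes the degree count on the monomials of $c$ legitimate.
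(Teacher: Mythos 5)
Your proof is correct and takes exactly the route the paper intends: the paper gives no proof of this proposition (it cites \cite{BKS}), but it supplies precisely the input you use, namely the homogeneity of $c$ in degree $0$ for the Cartan grading and in degree $(0,0)$ for the Pauli grading of $U(\s)$. Your auxiliary lemma --- that the two-sided ideal generated by a homogeneous element of a group-graded unital algebra is a graded ideal --- is the standard remaining step, and your care about the neutral component containing $1$ and about the extension of the Lie-algebra grading to $U(\s)$ covers the only points that need checking.
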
 
\
 Now for any $ \lambda\in \cc$, we write $\ui := U({\s})/{\ii} $. Using Proposition \ref{I is graded}, it follows that $\ui$ \  has natural $ \ZZ $- and $\z ^{2} $-gradings induced from $L$. It is well-known (see e.g \cite{Mazorchuk}) that the algebra $\ui$ is a free $\cc[h]  $-module with basis $ \mathcal{B}_{0}=\left\lbrace 1,x,y,x^{2},y^{2},\ldots\right\rbrace  $, and so it is a vector space over $ \cc $ with basis $ \mathcal{B}=\left\lbrace 1,h,h^{2},\ldots \right\rbrace .\mathcal{B}_{0}$.
 Note that the basis $\mathcal{B} $ is a basis of $\ui$\ consisting of homogeneous element with respect to the Cartan grading by $ \ZZ $. 
 
 A basis of $\ui$\ over $ \cc $ consisting of homogeneous element with respect to the Pauli grading by $\z^{2} $ can  be computed as follows.
Set $\wh{B}_0=\{1,B,C,BC,B^2,B^2C,B^3,B^3C,\ld\}$. Then easy calculations, using the induction by the degree in $B$ and the relation $C^2=h^2+B^2-\lambda^{2}-2\lambda$ show that the set $\wh{B}=\{1,h,h^2,\ld\}\cdot\wh{B}_0$ is a $\ZZ_2^2$-homogeneous basis of $\ui$.
 
 Now let $ p(t) =\frac{1}{4}((\lambda^{2}+2\lambda)-2t-t^{2})\in{\cc}[t] $. Then, inside $\ui$\, for any $q(t)\in\C[t]$, we have the following relations:
 \begin{center}
 	$ x^{k}q(h)=q(h-2k)x^{k} $\\
 	$ y^{j}q(h)=q(h+2j)x^{j} $ 
 \end{center}
 If $k\geq j  $ then
 \begin{center}
 	$ x^{k}y^{j}=p(h-2k)\cdots p(h-2(k-j+1))x^{k-j} $\\
 	$ y^{j}x^{k}=p(h+2(j-1))\cdots p(h)x^{k-j} $\\
 \end{center}
 If $ j\geq k $ then
 \begin{center}
 	$ x^{k}y^{j}=p(h-2k)\cdots p(h-2)y^{j-k} $\\
 	$ y^{j}x^{k}=p(h+2(j-1))\cdots p(h+2(j-k))y^{j-k} $.\\
 \end{center} 

 One more property that is important for us is the following.
  
\begin{thm}\cite[Theorem 4.26]{Mazorchuk} \label{finite lenght }
	For any non-zero left ideal $ I \subset  \ui$, the $\ui$-module $\ui /I $ has finite length.
	
\end{thm}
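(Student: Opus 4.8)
The plan is to reduce to a cyclic module and then separate its $\cc[h]$-torsion part from its torsion-free part, handling the former by weight-module theory and the latter by localizing at $\cc[h]$.

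First I would observe that if $I$ is a nonzero left ideal and $0\neq a\in I$, then $\ui a\subseteq I$, so $\ui/I$ is a quotient of $\ui/\ui a$; hence it suffices to show that $M:=\ui/\ui a$ has finite length. Next I would set up the localization: using the relations $x^{k}q(h)=q(h-2k)x^{k}$ and $y^{j}q(h)=q(h+2j)y^{j}$ one checks that $S:=\cc[h]\setminus\{0\}$ is a two-sided Ore set in $\ui$, and since $yx=p(h)$ and $xy=p(h-2)$ are nonzero polynomials in $h$, the elements $x$ and $y$ become invertible in $\ui_{S}:=S^{-1}\ui$, so that $\ui_{S}\cong\cc(h)[x^{\pm1};\sigma]$ with $\sigma(f(h))=f(h-2)$. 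This is a (noncommutative) principal ideal domain, over which every proper cyclic module $\ui_{S}/\ui_{S}b$ with $b\neq0$ is finite-dimensional over $\cc(h)$, hence of finite length over $\ui_S$.

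For the $S$-torsion submodule $M^{\mathrm{t}}\subseteq M$: each of its finitely many generators is annihilated by some $q(h)$, hence so is every $x^{k}$- and $y^{j}$-multiple of it, so $h$ acts locally finitely on $M^{\mathrm{t}}$. This yields a generalized weight decomposition $M^{\mathrm{t}}=\bigoplus_{\mu}(M^{\mathrm{t}})^{\mu}$ with $x(M^{\mathrm{t}})^{\mu}\subseteq(M^{\mathrm{t}})^{\mu+2}$, $y(M^{\mathrm{t}})^{\mu}\subseteq(M^{\mathrm{t}})^{\mu-2}$, all weight spaces finite-dimensional, and support contained in finitely many cosets of $2\zzz$. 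On each coset, $yx$ and $xy$ act (up to nilpotents) by $p(\mu)$ and $p(\mu-2)$, which vanish for at most the two weights $\mu$ with $p(\mu)=0$; outside that finite set $x$ and $y$ restrict to mutually inverse isomorphisms of consecutive weight spaces, so the weight-space dimensions stabilize in both directions, and the classical structure theory of $\s$-weight modules with finite-dimensional weight spaces forces $M^{\mathrm{t}}$ to have finite length.

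It then remains to bound the length of $N:=M/M^{\mathrm{t}}$. Since $N$ is $S$-torsion-free it embeds in $N_{S}=\ui_{S}/\ui_{S}a$, of finite length $\ell$ over $\ui_{S}$, and I would induct on $\ell$: picking a simple $\ui_{S}$-submodule $\bar V\subseteq N_{S}$ and setting $V:=N\cap\bar V$ (so $V_{S}=\bar V$), the quotient $N/V$ localizes to something of length $\ell-1$ and has finite-length $S$-torsion submodule by the previous step, so by induction $N/V$ has finite length, and one is reduced to the case where $N$ itself is finitely generated, $\cc[h]$-torsion-free, with $N_{S}$ simple over $\ui_{S}$ — equivalently, $\ui/J$ with $\ui/J$ torsion-free. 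I expect this last case to be the main obstacle: one cannot transfer length directly along $\ui\hookrightarrow\ui_{S}$ because an ascending chain in $N$ may have $S$-torsion jumps, and ruling out an infinite such chain requires the explicit description of the simple $\cc[h]$-torsion-free $\ui$-modules (and of the finitely generated modules localizing to a fixed simple $\ui_{S}$-module), i.e.\ the structure theory of torsion-free modules over the generalized Weyl algebra $\ui$. Granting it, $N$, and hence $M$ and $\ui/I$, have finite length. The conceptual summary is that $\ui$ has Krull dimension $1$, so that $\ui/I$ has Krull dimension $0$ for every nonzero $I$, i.e.\ is Artinian, and an Artinian Noetherian module has finite length; the argument above is the hands-on route to that Krull-dimension drop.
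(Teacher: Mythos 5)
The paper does not prove this statement at all: it is imported verbatim from Mazorchuk's book (Theorem 4.26), so there is no in-paper proof to compare with, and your argument has to stand on its own. Its skeleton is the standard one (and is sound as far as it goes): replace $\ui/I$ by a cyclic quotient $\ui/\ui a$, split off the $\cc[h]$-torsion submodule, handle that part by weight-module theory with a fixed central character (finite-dimensional generalized weight spaces, finitely many cosets of $2\zzz$, finitely many break points of $p$), and localize at $S=\cc[h]\setminus\{0\}$ to land in the skew Laurent ring $\cc(h)[x^{\pm1};\sigma]$, over which $\ui_S/\ui_S a$ is finite-dimensional over $\cc(h)$.

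The problem is that this only reduces the theorem to its hardest case, which you then explicitly assume ("Granting it"). Since $\ui$ is Noetherian, the ascending chain condition is automatic; the entire content of the theorem is the descending chain condition, and the danger is precisely an infinite strictly descending chain $N\supset N_1\supset N_2\supset\cdots$ inside the torsion-free part whose localizations all coincide, i.e.\ whose successive quotients are nonzero torsion (weight) modules. Nothing in your proposal rules this out: finiteness of the length of $N_S$ over $\ui_S$, together with finite length of each individual torsion subquotient, is compatible a priori with infinitely many torsion composition factors. Moreover, the fix you suggest (the classification of simple torsion-free $\ui$-modules) would not by itself bound the number of such factors; what is needed is a genuine finiteness mechanism for finitely generated $\cc[h]$-torsion-free modules of finite rank — e.g.\ the normal-form/degree analysis of $\ui a$ used by Block, Bavula and Mazorchuk, or a Gelfand--Kirillov dimension and multiplicity (Bernstein-type additivity) argument showing that a finitely generated module of GK dimension at most $1$ with central character has length bounded by its multiplicity. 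Finally, the closing appeal to "$\ui$ has Krull dimension $1$, hence $\ui/I$ is Artinian" is circular as stated: the Krull-dimension drop for the non-normal element $a$ is essentially equivalent to the statement being proved, and computing $\mathrm{K.dim}\,\ui=1$ requires exactly the analysis you skipped. So the proposal is an accurate reduction, but the decisive step is missing.
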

 \begin{prop}
 	Set $\wh{B}_0=\{1,B,C,BC,B^2,B^2C,B^3,B^3C,\ld\}$. \!\!Then $\wh{B}=\{1,h,h^2,\ld\}\cdot\wh{B}_0$ is a $\ZZ_2^2$-homogeneous basis of $ \ui $.	
 \end{prop}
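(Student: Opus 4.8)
The plan is to exhibit $\wh{B}$ explicitly as a set of $\ZZ_2^2$-homogeneous elements, show it spans $\ui$ over $\cc$, and then show it is linearly independent by a dimension-counting (graded) argument against the already-known Cartan basis $\cB$. First I would record the $\ZZ_2^2$-degrees: under the Pauli grading, $h=A$ has degree $(1,0)$, $B$ has degree $(0,1)$, and $C$ has degree $(1,1)$, so a monomial $h^a B^b C^{\varepsilon}$ with $\varepsilon\in\{0,1\}$ is homogeneous of degree $(a+b+\varepsilon,\,b+\varepsilon)\bmod 2$ — in particular every element of $\wh{B}$ is $\ZZ_2^2$-homogeneous, which is the only part of the conclusion that is immediate.

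Next I would prove that $\wh{B}$ spans $\ui$. The natural route is to work in the PBW-type basis $\cB=\{1,h,h^2,\ld\}\cdot\cB_0$ with $\cB_0=\{1,x,y,x^2,y^2,\ld\}$ guaranteed by Mazorchuk, and rewrite each such monomial in terms of $h$, $B$, $C$. Since $x=\tfrac12(B+C)$ and $y=\tfrac12(B-C)$, one has $x^k,y^k\in\Span\{B^iC^j\}$, so it suffices to reduce all words in $B$ and $C$ to the normal form $h^aB^bC^{\varepsilon}$ with $\varepsilon\le 1$. The key relation is $C^2 = h^2+B^2-\lambda^2-2\lambda$ inside $\ui$ (equivalently $c=A^2+B^2-C^2+I$ acts as $(\lambda+1)^2$), which lets one eliminate every occurrence of $C^2$; one also needs the commutation rules among $h,B,C$ (e.g. $hB=Bh+2C$, $hC=Ch+2B$, $BC+CB=\text{(polynomial in }h)$, read off from $[h,x],[h,y],[x,y]$ rewritten in the $B,C$ basis) to move all $h$'s to the left and collect $C$'s together. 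Running an induction on the total degree in $B,C$ — as the excerpt already indicates — shows every monomial lands in $\Span\wh{B}$, hence $\wh{B}$ spans.

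Finally, linear independence. Rather than a direct argument, I would count dimensions degree-by-degree against $\cB$. Fix a $\ZZ$-degree $d$ (with respect to the Cartan grading) — but note $\wh{B}$ is not $\ZZ$-homogeneous, so instead I would filter by total degree: let $\ui^{\le n}$ be the span of PBW monomials of degree $\le n$, a finite-dimensional space whose dimension is computed from $\cB$. Both $\cB\cap\ui^{\le n}$ and $\wh{B}\cap\ui^{\le n}$ (after checking the reduction above never raises total degree) have the same cardinality for each $n$ — each is counted by pairs (power of $h$, element of the respective second factor) with the appropriate degree bound — so a spanning set of the right size in a finite-dimensional space is automatically a basis; letting $n\to\infty$ gives the result. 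The main obstacle is the bookkeeping in the reduction step: one must verify that eliminating $C^2$ and commuting $h$ past $B,C$ terminates and does not increase the total PBW-degree, so that the dimension count is legitimate; once that degree-preservation is pinned down the rest is formal.
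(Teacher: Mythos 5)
Your proposal follows essentially the same route as the paper: filter $\ui$ by total (PBW) degree, reduce every monomial to the form $h^kB^\ell C^m$ with $m\le 1$ by induction using $C^2=h^2+B^2-\lambda^2-2\lambda$ (checking the commutators drop to lower filtration degree), and conclude linear independence by matching the count of these monomials in each filtration level against the count coming from the known basis $\mathcal{B}$. One small slip: the $\ZZ_2^2$-degree of $h^aB^bC^{\varepsilon}$ is $(a+\varepsilon,\,b+\varepsilon)\bmod 2$, since $B$ has degree $(0,1)$; this does not affect the argument, as only homogeneity is needed.
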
 
 \begin{proof}
 	Consider the natural ascending filtration $\{ U^{(n)}\;|\;n=0,1,2,\ldots\}$ of $ U(\s) $, where $ U^{(n)} $ is the linear span of monomials of the form $ e_{1}e_{2}\cdots e_{k} $ where $ e_{i}\in \s $ for all $ i $ and $ k\leq n $. Using PBW Theorem, if $ \left\lbrace  f_{1},f_{2},f_{3}\right\rbrace  $ be an ordered basis of $ \s $, then this filtration can just be  the linear span of the standard monomials of degree less than or equal $ n $, that is the monomials of the form $ f_{1}^{i}f_{2}^{j}f_{3}^{l} $ where the degree of the monomial is the sum of the powers.
 	Consider now the filtration $\{ I^{(n)}\;|\;n=0,1,2,\ldots\}$ of $ \ui $  induced by the natural filtration of $U( \s) $. Using the above basis $\mathcal{B}$, we can write that the basis of $I^{(n)}$ in the form of the set 
 \begin{equation}\label{In}	
 	\{ h^kx^\ell\;|\; 0\le k,\ell\le n,\:k+\ell\le n\}\cup\{ h^ky^\ell\;|\; 0\le k,\ell\le n\,\:k+\ell\le n\}.
 	\end{equation}
  Now using (\ref{sl20}), we can assume that the basis of $I^{(n)}$ is formed by some of the monomials  $\{ h^{k}B^{l}C^{m}\mid k+l+m\leq n\}$.
 	
Let us prove that actually $I^{(n)}$ is spanned by the same monomials such that $ m=0 \text{ or } 1$.  Let us use induction on $n$, with an obvious basis, and the relation  $ C^{2}=h^{2}+B^{2}-\mu$, where $ \mu=\lambda^{2}+2\lambda $, holding in $ \ui $. As a result,
it is sufficient to deal with the elements $ h^{k}B^{l}C^{2}$ with $k+\ell+2=n$. We write 
\[
 h^{k}B^{l}C^{2}=h^{k}B^{l}(h^{2}+B^{2}-\mu)=h^{k}(B^{l}h^{2}+B^{l+2}-\mu B^l).
 \]
Since \[h^{k}B^{l}h^{2}=h^{k+2}B^l+h^{k}[B^{l},h^{2}]\] where the second term belong to $ I^{(s)} $ for some $ s<n $, we can use the induction step and write $ h^{k}B^{l}C^{2} $  as a sum of the terms of the desired form. Now it follows that $I^{(n)}$ is spanned by the set of monomials
 \begin{equation}\label{Inn}
\left\langle h^{k}B^{l}C^{m}\mid k+l+m\leq n \text{ and } m=0,1\right\rangle.
\end{equation}
 	Using the basis (\ref{In}), we find that the dimension of  $ I^{(n)} $ equals $\sum_{i=0}^{n}( 2i+1) $. Clearly, the number of elements in (\ref{Inn}) is the same and hence  a basis of  $\ui$ is formed by the monomials $\{ h^kB^\ell\}\cup\{ h^kB^\ell C\}$ where $ k,\ell=0,1,2,\ldots$
 \end{proof}

\subsection{$ \s $-modules: torsion-free modules}\label{ssltf}
\begin{defn}
	Following \cite{Mazorchuk}, an $ \s $-module $M$ is called  \textit{torsion} if for any $ m\in M $ there exists non-zero $ p(
	t)\in {\cc}[t] $ such that $ p(h).m =0$. In other words, $h$ has an eigenvector in $M$. We call $ M $ \emph{torsion-free} if $ M\neq 0 $ and $ p(h).m\neq 0 $ for all $ 0\neq m\in M$ and all non-zero $p(t)\in {\cc}[t] $. In other words, $M$ is a free $\C[h]$-module. If the rank of this module is $n$, we say that $M$ is a torsion-free $\s$-module  of \textit{rank} $ n $. 
\end{defn}

\begin{thm}\cite[Theorem 6.3]{Mazorchuk} \label{simple eather weight or free}
	A simple $ \s $-module is either a weight or a torsion-free module. \hfill $ \Box $
\end{thm}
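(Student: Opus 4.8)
The plan is to use the $\C[h]$-module structure that every $\s$-module carries, and to show that the $\C[h]$-torsion part of a simple module $M$ is automatically an $\s$-submodule; simplicity then forces it to be $\{0\}$ or all of $M$, which is precisely the asserted dichotomy.

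Concretely, I would set $T(M) = \{\, m\in M : p(h)\cdot m = 0 \text{ for some nonzero } p(t)\in\C[t]\,\}$. Since $\C[h]$ is a commutative domain, $T(M)$ is a $\C[h]$-submodule of $M$. The one point with genuine content is that $T(M)$ is also stable under the actions of $x$ and $y$. For this I would use the elementary identities $x\,q(h) = q(h-2)\,x$ and $y\,q(h) = q(h+2)\,y$, valid in $U(\s)$ (hence in any $\s$-module) for every $q(t)\in\C[t]$; these are immediate from $[h,x]=2x$ and $[h,y]=-2y$ and are the degree-one cases of the relations $x^{k}q(h)=q(h-2k)x^{k}$, $y^{j}q(h)=q(h+2j)y^{j}$. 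If $p(h)\cdot m = 0$, then $p(h-2)\cdot(x\cdot m) = x\cdot(p(h)\cdot m) = 0$ and $p(t-2)$ is again a nonzero polynomial, so $x\cdot m\in T(M)$; likewise $p(h+2)\cdot(y\cdot m) = 0$ gives $y\cdot m\in T(M)$, and $p(h)\cdot(h\cdot m) = h\cdot(p(h)\cdot m) = 0$ gives $h\cdot m\in T(M)$. Hence $T(M)$ is an $\s$-submodule of $M$.

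Now simplicity finishes the argument: if $T(M)=M$, then every element of $M$ is annihilated by a nonzero polynomial in $h$, i.e.\ $M$ is a torsion (weight) module; if $T(M)=\{0\}$, then $p(h)\cdot m\ne 0$ for all nonzero $m$ and all nonzero $p$, and since a simple module is nonzero, $M$ is torsion-free. If one wants the sharper statement that a torsion simple module is a genuine weight module, one adds that $M=\bigoplus_{\mu} M^{[\mu]}$ decomposes into generalized $h$-eigenspaces (each cyclic submodule $\C[h]\cdot m\cong\C[h]/(p(t))$ being finite-dimensional), that $x\cdot M^{[\mu]}\subseteq M^{[\mu+2]}$ and $y\cdot M^{[\mu]}\subseteq M^{[\mu-2]}$ by the same commutation relations, and hence that $\bigoplus_{k\in\ZZ}M^{[\mu_0+2k]}$ is a nonzero $\s$-submodule, so equal to $M$; thus $\Supp M$ lies in a single coset of $2\ZZ$.

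The only step carrying real weight is the verification that $T(M)$ is closed under $x$ and $y$, and the thing to be careful about there is the index shift — it is $p(h-2)$, not $p(h)$, that kills $x\cdot m$ — since a slip here would wreck the argument. Everything else is formal, so I do not expect any serious obstacle.
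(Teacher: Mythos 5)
Your argument is correct, but note that the paper contains no proof of this statement to compare against: it is imported from Mazorchuk \cite[Theorem 6.3]{Mazorchuk} and simply marked with $\Box$, so what you have written is a self-contained proof of a quoted result. Your route is the standard one: the $\C[h]$-torsion part $T(M)$ is a $\C[h]$-submodule because $\C[t]$ is a domain, it is stable under $x$, $y$, $h$ by the shift relations $xq(h)=q(h-2)x$ and $yq(h)=q(h+2)y$ (your index shifts are the right ones), and simplicity forces $T(M)=\{0\}$ or $T(M)=M$, which is exactly the torsion-free/torsion dichotomy the paper works with (the paper itself identifies ``torsion'' with ``weight'' for simple modules). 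One refinement to your last paragraph: if ``weight module'' is meant in the strict sense of a direct sum of genuine $h$-eigenspaces, the generalized-eigenspace decomposition $M=\bigoplus_\mu M^{[\mu]}$ only gives a generalized-weight module. The cleaner finish in the case $T(M)=M$ is to pick one honest eigenvector (any nonzero finite-dimensional subspace $\C[h]\cdot m$ contains one, since we are over $\C$), observe that $x$ and $y$ carry $h$-eigenvectors to $h$-eigenvectors with eigenvalue shifted by $\pm2$, so the span of all $h$-eigenvectors is a nonzero submodule, hence equals $M$ by simplicity; this yields the genuine eigenspace decomposition, and your coset-of-$2\ZZ$ observation about the support then follows as you say. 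For the statement as used in this paper, your argument is already complete.
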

Theorem \ref{simple eather weight or free} means that if $ h  $ has at least one eigenvector on $ M $, then $ M $ is a weight module.

 As a consequence of Theorem \ref{ann of simple module}, it is sufficient to describe simple torsion-free $\ui$-modules instead of simple $ U({\s})$-modules  (see e.g \cite{Mazorchuk}).
 
The classification of simple torsion free $\ui$-module is given in the major paper \cite{irr..rep..block}. In \cite{Bavula classif} the author uses a certain localization $ \aaa $ of $\ui$ to obtain the following.

\begin{thm}\cite[Proposition 3]{Bavula classif}
	Let $ M  $ be a simple torsion-free $\ui$-module, then $ M\cong \ui/(\ui\cap {\aaa}\alpha) $, for some $ \alpha\in \ui$\ which is irreducible as an element of $ \aaa $. 
\end{thm}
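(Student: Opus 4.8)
The plan is to realise $M$ as a cyclic module $\ui/L$ for a maximal left ideal $L$, to transport everything into the localization $\aaa$ where left ideals are principal, to write $\aaa L=\aaa\alpha$, and finally to recover $L$ from $\alpha$ by exploiting torsion-freeness. First I would recall the relevant features of $\aaa$ from \cite{Bavula classif}: it is the (left and right) Ore localization of $\ui$ obtained by inverting all nonzero elements of $S=\C[h]\setminus\{0\}$, and it is a noncommutative principal (left and right) ideal domain — concretely, $\aaa\cong\C(h)[x^{\pm1};\sigma]$, the skew Laurent polynomial ring with $\sigma\bigl(f(h)\bigr)=f(h-2)$, since already in $S^{-1}\ui$ the element $x$ becomes invertible (because $yx=p(h)\in S$). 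Being simple, $M$ is cyclic: $M=\ui m_0$ for any $0\ne m_0\in M$, whence $M\cong\ui/L$ with $L=\mathrm{Ann}_{\ui}(m_0)$ a maximal left ideal (proper and nonzero, as $\ui$ is not a division ring). By the definition of torsion-freeness, $S$ acts injectively on $M$; equivalently $L\cap S=\emptyset$ and $L$ is \emph{$S$-saturated}: if $sw\in L$ with $s\in S$, then $s\bar w=0$ in the torsion-free module $M=\ui/L$, so $\bar w=0$, i.e.\ $w\in L$.

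Next I would extend scalars to $\aaa$. Since $M$ is $S$-torsion-free, the canonical map $M\to M_S:=\aaa\otimes_{\ui}M$ is injective, and $M_S\cong\aaa/\aaa L$, where $\aaa L=\{\,s^{-1}\ell : s\in S,\ \ell\in L\,\}$ is a left ideal of $\aaa$ (the Ore condition lets one put finite sums over a common left denominator). I would then observe that localization preserves simplicity in this situation: if $N$ is an $\aaa$-submodule of $M_S$, then $N\cap M$ is a $\ui$-submodule of $M$, hence $0$ or $M$; in the first case every $n\in N$ equals $s^{-1}m$ with $m=sn\in N\cap M=0$, so $N=0$, while in the second $N\supseteq M$ forces $N\supseteq\aaa M=M_S$. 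Thus $M_S$ is simple, and nonzero (as $1\notin\aaa L$, else $S\cap L\ne\emptyset$), so $\aaa L$ is a maximal left ideal of $\aaa$. Because $\aaa$ is a principal left ideal domain, $\aaa L=\aaa\alpha$ for some $\alpha$, with $\alpha\ne0$ (since $\aaa L\supseteq L\ne0$) and $\alpha$ not a unit (since $M_S\ne0$); maximality of $\aaa\alpha$ then says exactly that $\alpha$ is irreducible in $\aaa$. Finally $\alpha$ can be chosen inside $\ui$: writing $\alpha=t^{-1}\ell_0$ with $t\in S$, $\ell_0\in L$, we have $t\alpha=\ell_0\in L$ and $\aaa(t\alpha)=\aaa\alpha$ (as $t$ is a unit of $\aaa$), so replacing $\alpha$ by $t\alpha$ we may assume $\alpha\in L\subseteq\ui$, still irreducible in $\aaa$.

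It then remains to show $L=\ui\cap\aaa\alpha$. The inclusion $L\subseteq\ui\cap\aaa\alpha$ is clear, since $L\subseteq\aaa L=\aaa\alpha$ and $L\subseteq\ui$. For the reverse inclusion, take $w\in\ui$ with $w\in\aaa\alpha$; writing $w=s^{-1}\ell$ with $s\in S$, $\ell\in L$, we get $sw=\ell\in L$, and $S$-saturation of $L$ yields $w\in L$. Hence $L=\ui\cap\aaa\alpha$, and $M\cong\ui/\bigl(\ui\cap\aaa\alpha\bigr)$ with $\alpha\in\ui$ irreducible in $\aaa$, as required.

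The crux — the step I would single out as the main point rather than routine Ore-theoretic bookkeeping — is the structural fact that $\aaa$ is a principal (left) ideal domain; this is what turns ``$M_S$ simple'' into ``$\alpha$ irreducible'' and is the reason the classification of simple torsion-free $\ui$-modules admits this clean form. If one wanted this from scratch rather than citing \cite{Bavula classif}, the work would be in identifying $\aaa$ with the skew Laurent ring $\C(h)[x^{\pm1};\sigma]$ over a field and invoking the classical fact that such rings are principal ideal domains. The only other point demanding care is to keep track of which part of the hypothesis is used where: one needs merely that $S$ acts \emph{injectively} on $M$ — never bijectively — since the argument passes to $M_S$ (which is an $\aaa$-module) rather than pretending $M$ itself is one; indeed $M$ typically is \emph{not} an $\aaa$-module.
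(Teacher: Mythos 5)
The paper offers no proof of this statement: it is quoted directly from Bavula \cite{Bavula classif} (Proposition 3), and even the localization $\aaa$ is only named here, not constructed, so there is no in-paper argument to compare yours against. Your proof is correct and follows the standard route that underlies Bavula's result: realize $M\cong\ui/L$ with $L$ a maximal left ideal which is $S$-saturated for $S=\C[h]\setminus\{0\}$ by torsion-freeness, localize to get the maximal left ideal $\aaa L=\aaa\alpha$ in $\aaa\cong\C(h)[x^{\pm1};\sigma]$, clear the denominator to take $\alpha\in L\subset\ui$, and recover $L=\ui\cap\aaa\alpha$ from saturation. You also correctly isolate the one genuine structural input, namely that $\aaa$ is a principal (left) ideal domain; like the statement itself, this must be imported from \cite{Bavula classif} (or proved via the skew-Laurent description), and granting it, the remaining steps you supply (nonzero annihilator because $\ui$ is a domain but not a division ring, simplicity of the localized module, and the equivalence of maximality of $\aaa\alpha$ with irreducibility of $\alpha$) are all sound.
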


\begin{thm}[\cite{BKS}]\label{any rank 1}
Torsion free $ \s $-modules of rank $ 1 $ cannot be $ \zzz $ or $\zzz ^{2}_{2} $-graded.
\end{thm}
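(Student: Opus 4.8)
Recall that a torsion-free $\s$-module $M$ of rank $1$ is, as a $\C[h]$-module, free of rank $1$; fix a generator, so that $M=\C[h]\cdot v$ and every element is uniquely $p(h)v$. I would first argue that a $\ZZ$- or $\ZZ_2^2$-grading on $M$ would have to be compatible with one of the two fine gradings on $\s$ (Cartan or Pauli) via Section~\ref{ssGGS}, after possibly coarsening; since $h$ (respectively $A$) is homogeneous of degree $0$, multiplication by $h$ preserves each homogeneous component $M_g$, so each $M_g$ is a $\C[h]$-submodule of $M$. But $M$ has rank $1$ over $\C[h]$, which is a PID, so any nonzero $\C[h]$-submodule has rank $1$; two distinct nonzero homogeneous components would then have nonzero intersection (any two rank-$1$ submodules of a rank-$1$ free module meet nontrivially), contradicting $M=\bigoplus_g M_g$. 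Hence the grading is trivial, i.e. supported on a single element $g_0$, with $M=M_{g_0}$.

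Now I would use the action of $x$ and $y$ (respectively $B$ and $C$). In the Cartan case $x$ has degree $1$ and $y$ degree $-1$ in $\ZZ$; if $M=M_{g_0}$ is concentrated in one degree then $x\cdot M\subseteq M_{g_0+1}=0$ and $y\cdot M\subseteq M_{g_0-1}=0$ (since those components vanish), forcing $x$ and $y$ to act as zero, so $\s$ acts trivially — impossible for a faithful, let alone simple torsion-free, module (indeed $[x,y]=h$ would act as zero). The same argument in the Pauli case: $B$ has degree $(0,1)$ and $C$ has degree $(1,1)$, both nonzero, so $B$ and $C$ annihilate $M$, and then $A=h$ is a multiple of $[B,C]$ and also acts trivially, again a contradiction. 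One must of course check the remaining possibility that the $\ZZ_2^2$- or $\ZZ$-grading is a \emph{proper} coarsening — but a coarsening of the Cartan or Pauli grading on $\s$ still sends $x,y$ (or $B,C$) into a homogeneous component \emph{distinct} from the identity component exactly when the relevant group is $\ZZ$ or $\ZZ_2^2$ (a smaller coarsening would not embed $\ZZ$ or $\ZZ_2^2$), so the same conclusion holds; alternatively one invokes the universal group.

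The one subtlety I would be careful about is the very first reduction: a priori a grading on $M$ by $\ZZ$ or $\ZZ_2^2$ need not a priori be ``aligned'' with a fixed grading on $\s$, because the statement only presupposes that $\s$ is graded by the \emph{same} group, and one must use that, up to isomorphism, any $\ZZ$-grading on $\s$ is a coarsening of the Cartan grading and any $\ZZ_2^2$-grading is a coarsening of the Pauli grading (Section~\ref{ssGGS}), together with the fact that the only nontrivial such coarsenings that remain $\ZZ$- or $\ZZ_2^2$-gradings are the fine ones themselves. Given that, the module-theoretic step — each homogeneous $M_g$ is a $\C[h]$-submodule of a rank-$1$ free module, hence the support is a single point — is the heart of the argument and is short. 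I expect the main obstacle to be bookkeeping the coarsenings of $\s$ carefully enough to be sure that $x$ (or $B$) and $y$ (or $C$) genuinely land outside the identity component; once that is settled, the contradiction $[x,y]=h\mapsto 0$ finishes the proof.
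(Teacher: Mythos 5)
The paper itself offers no proof of Theorem \ref{any rank 1} --- it is quoted from \cite{BKS} --- so your attempt has to stand on its own, and there it has a genuine gap: the $\ZZ_2^2$ case rests on a false statement about the Pauli grading. In the Pauli grading of Section \ref{ssGGS} the identity component of $\s$ is zero: $A=h$ is homogeneous of degree $(1,0)$, $B$ of degree $(0,1)$ and $C$ of degree $(1,1)$. Hence multiplication by $h$ does \emph{not} preserve the homogeneous components of $M$; it maps $M_g$ into $M_{g+(1,0)}$. So the $M_g$ are not $\C[h]$-submodules, your rank-one intersection argument does not apply, and the conclusion that the support is a single element fails --- indeed it \emph{cannot} be a single element, since then $h\cdot M_{g_0}\subseteq M_{g_0+(1,0)}=0$ would already contradict torsion-freeness. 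Your argument for the Cartan ($\ZZ$) case is correct (and simpler, for rank $1$, than the Casimir-element argument the paper uses in Section 4 for arbitrary finite rank), because there $h$ genuinely has degree $0$.

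The Pauli case can be repaired along the lines you intend, but it needs an extra step. The element $h^2$ is homogeneous of degree $(0,0)$, so each $M_g$ is a $\C[h^2]$-submodule; since $M$ is free of rank $2$ over $\C[h^2]$, at most two components are nonzero. Torsion-freeness forces the support to be closed under adding $(1,0)$ (otherwise some nonzero homogeneous $v$ satisfies $hv=0$), so the support must be exactly $\{g_0,\ g_0+(1,0)\}$. But then $B$ and $C$ send both components into components outside the support, hence act as zero, and $[B,C]=-2h$ acts as zero on $M$, again contradicting torsion-freeness. Finally, your discussion of coarsenings is shakier than you suggest (for instance, placing $h$ in degree $(0,0)$ and $x,y$ in degree $(1,0)$ gives a perfectly good nontrivial $\ZZ_2^2$-grading of $\s$ that is a coarsening of the Cartan grading, and the trivial grading is always available), but in the paper's usage ``$\ZZ$-graded'' and ``$\ZZ_2^2$-graded'' refer to compatibility with the Cartan and Pauli gradings respectively, so that point is interpretive; the real error is the degree of $A$.
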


\section{Simple graded $ \s $-modules of rank $ 2 $ }
In this section we will study simple torsion-free $ \s $-modules of rank $ 2 $. In particular, we will construct the first family of simple $ \zzz_{2}^{2} $-graded torsion-free $ \s $-modules of rank $ 2 $.

	Let $ \lambda\in \cc $, and consider the $ \ui$-module  $ \mc=\ui/\ui C$. For $ u,v \in U(\s)$, we say that $ u\equiv v $ and say that $u$ is equivalent to $v$,  if and only if $ u+\ui C=v+\ui C $. All elements of the form	$ {h}^{k}{B}^{l}{C}$ are equivalent to 0. Moreover  $ {B}^{2}\equiv\mu-{h}^{2} $ where $ \mu=\lambda^{2}+2\lambda $ . Hence \[{h}^{k}{B}^{2}\equiv{h}^{k}(\mu-{h}^{2})=\mu {h}^{k}-{h}^{k+2},\] which implies that any element of $ \mc $ can be written as a linear combination of elements of the form $ {h}^{k}{B}^{m} $ where $ m=0,1 $. This means that $ \mc $ is a torsion-free $ \s $-module of rank $ 2 $, with basis $ \left\lbrace {1},{B}\right\rbrace  $ as a $ \cc[h] $-module, that is, $ \mc $ can be identified as $ \mc=\cc[h]\oplus\cc[h]{B} $. Note that $ \mc  $ is graded module since $ \ui C $ is a graded left ideal.
	
		\begin{thm}[Main Theorem 1]\label{mc simple}
		Let $ \lambda\in \cc\setminus 2\zzz $, then $ \mc $ is a simple $ \s $-module .
	\end{thm}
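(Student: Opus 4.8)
The plan is to take an arbitrary nonzero graded... wait, actually $\mc$ here is claimed to be simple as an $\s$-module (ungraded), so I should not restrict to graded submodules. Let me think about the structure. We have $\mc = \C[h] \oplus \C[h]B$ as a $\C[h]$-module, rank $2$, torsion-free. I want to show: any nonzero submodule $N \subseteq \mc$ equals $\mc$ when $\lambda \notin 2\ZZ$.

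Let me reconsider. First I would work out the action of $x, y$ (or $B, C$) on the basis $\{1, B\}$ over $\C[h]$. Using $x = (B+C)/2$, $y = (B-C)/2$, $h$, and the relations $C \equiv 0$, $B^2 \equiv \mu - h^2$ with $\mu = \lambda^2 + 2\lambda = (\lambda+1)^2 - 1$, I can compute $h \cdot h^k = h^{k+1}$, $h \cdot h^k B = h^{k+1}B$, $B \cdot h^k = h^k B$, $B \cdot h^k B = h^k B^2 \equiv h^k(\mu - h^2)$, $C \cdot h^k = h^k C \equiv 0$... but wait, $C$ does not commute with $h$: $Ch^k = (h+?)^k C$ — I need the commutation relation $Cq(h) = q(h \pm 2)C$ depending on the convention, since $C$ is a combination of $x$ and $y$ which shift the weight. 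Actually $C = x - y$, and $xq(h) = q(h-2)x$, $yq(h) = q(h+2)y$, so $Cq(h) = q(h-2)x - q(h+2)y$. Hence $C \cdot h^k \cdot 1 = (h-2)^k x \cdot 1 - (h+2)^k y \cdot 1$. Now $x \cdot 1 = \tfrac12(B+C)\cdot 1 \equiv \tfrac12 B$ and $y \cdot 1 = \tfrac12(B - C) \cdot 1 \equiv \tfrac12 B$. So $C \cdot h^k = \tfrac12[(h-2)^k - (h+2)^k] B$. Similarly I can compute $C \cdot h^k B$, which will land in $\C[h] \cdot 1$ after using $B^2 \equiv \mu - h^2$ and the shift relations. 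These explicit formulas are the computational backbone; they are routine but must be done carefully.

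The core of the argument: suppose $0 \neq N \subseteq \mc$ is a submodule. Take $0 \neq v \in N$, write $v = f(h) + g(h)B$ with $f, g \in \C[h]$ not both zero. I would then apply elements of $U(\s)$ to $v$ to "separate" the two components and reduce the $h$-degree, aiming to produce a nonzero element of $N$ lying in $\C[h] \cdot 1$ (or $\C[h] \cdot B$), and then a nonzero scalar. The key maneuvers: (i) using $B \cdot (\C[h]\cdot 1) \subseteq \C[h] B$ and $B \cdot (\C[h] B) \subseteq \C[h]\cdot 1$, the operator $B$ swaps the two "sheets"; (ii) using $h$ multiplies by $h$; (iii) the operator $C$ (or products like $Cx$, or the Casimir-type combinations) has the effect of a difference operator $q(h) \mapsto \tfrac12[q(h-2) - q(h+2)]$ on each sheet, which strictly lowers the $h$-degree by $1$ — this is the engine for an induction on $\deg_h$. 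Iterating a suitable composition of $B$ and $C$ one reduces to an element $c_1 \cdot 1 + c_2 \cdot B$ with $c_1, c_2 \in \C$ not both zero; applying $h$ and $B$ to this and taking combinations, one gets either $1 \in N$ or $B \in N$, and since $B \cdot B \equiv \mu - h^2$ and $\mu - h^2$ together with $1$ generate all of $\C[h]\cdot 1$ under multiplication by $h$... hmm, actually once $1 \in N$ then $h^k \cdot 1 \in N$ for all $k$, and $B \cdot h^k = h^k B \in N$, so $N = \mc$. Likewise if $B \in N$ then $B \cdot B = B^2 \equiv \mu - h^2 \in N$, and $h \cdot B = hB \in N$, and then $h^2 \cdot 1 = \mu \cdot 1 - B^2 \in N$, giving $1 \in N$ (using that from $h\cdot$ we climb up, and we can also get $\mu \cdot 1$); care is needed but it works.

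Where does $\lambda \notin 2\ZZ$ enter, and what is the main obstacle? The hard part is the reduction step, because the difference operator $q(h) \mapsto \tfrac12[q(h-2)-q(h+2)]$ annihilates constants, so naive iteration stalls once we reach degree $0$ on a sheet; I expect the genuine obstruction is that at low degree the available operators produce linear combinations governed by polynomials like $p(h-2k)\cdots$ evaluated near special points, and these vanish precisely when $\lambda \in 2\ZZ$ (this is exactly the classical fact that $\overline{Z}(\lambda)$-type modules become reducible for integral highest weight). Concretely, I anticipate that after reducing, one must invert an expression such as $p(\text{something})$ or a product of values of $p(t) = \tfrac14(\mu - 2t - t^2) = -\tfrac14(t - \lambda)(t + \lambda + 2)$; its roots are $t = \lambda$ and $t = -\lambda - 2$, and the places where the module's structure degenerates are when these roots are hit by the shifted arguments $h - 2k$, i.e. when $\lambda \in 2\ZZ$. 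So the plan is: (a) establish the explicit $x,y,B,C$-action formulas; (b) prove a "degree reduction lemma" showing any nonzero $v\in N$ yields a nonzero $v' \in N$ of strictly smaller total $h$-degree unless $v$ is already a nonzero scalar combination of $1$ and $B$, with the reduction using a combination of $B$ and $C$ whose coefficient is a nonzero polynomial in $h$ precisely because $\lambda \notin 2\ZZ$; (c) handle the base case $c_1 \cdot 1 + c_2 B$ separately, again invoking $\lambda \notin 2\ZZ$ to peel off one sheet; (d) conclude $1 \in N$ or $B \in N$, hence $N = \mc$. The main obstacle is making step (b)–(c) precise: tracking exactly which polynomial coefficient must be nonzero and verifying it is nonzero iff $\lambda \notin 2\ZZ$, rather than merely nonzero for generic $\lambda$.
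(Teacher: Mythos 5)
Your overall skeleton matches the paper's: realize $\mc=\cc[h]\oplus\cc[h]B$, compute how $B,C$ move between the two $\cc[h]$-sheets, reduce an arbitrary element of a nonzero submodule in $h$-degree down to $1$ or $B$, and locate the failure at even integral $\lambda$. But there is a genuine gap exactly at the point you yourself flag as ``the main obstacle,'' and the mechanism you propose for the reduction is partly wrong. The operator $C$ acts as the difference operator $q(h)\mapsto \frac12\big(q(h-2)-q(h+2)\big)$ only on the sheet $\cc[h]\cdot 1$ (and it sends you to the $B$-sheet); on $\cc[h]B$ one has $C.(g(h)B)\equiv \frac12\big((\mu-h^2+2h)g(h-2)+(-\mu+h^2+2h)g(h+2)\big)$, whose leading coefficient is $2(\deg g+1)\neq 0$, so it \emph{raises} the degree. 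Hence there is no single $B,C$-word that uniformly lowers degree ``on each sheet,'' and your induction as described does not start. The paper's actual engine consists of the sheet-preserving operators $C^2+4n^2$ and $CB-2(n+1)h$ applied to a degree-$n$ polynomial $f(h)$ (and $C^2+4(l+1)^2$, $CB-2(l+2)h$ on $\cc[h]B$), each of which strictly lowers degree; the whole force of the hypothesis $\lambda\notin 2\zzz$ is that these two reductions cannot \emph{both} vanish: equating the subleading coefficients forced by $C^2.v\equiv-4n^2v$ and $CB.v\equiv 2(n+1)hv$ gives $\mu=4(n^2-n)$, i.e. $\lambda\in\{-2n,2n-2\}$. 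Your anticipated source of the obstruction (vanishing of shifted values of $p(t)$) is a guess that is never verified and is not how the parity of $\lambda$ actually enters; since this non-vanishing statement \emph{is} the theorem, the proposal omits its heart.

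A second, related gap is your treatment of mixed elements: you claim one can iterate $B$ and $C$ to reduce any $f(h)+g(h)B$ to $c_1+c_2B$, but the degree-lowering operators have different ``eigenvalues'' on the two sheets ($-4n^2$ versus $-4(l+1)^2$), so the two components cannot in general be pushed down simultaneously; in fact for $\lambda\in 2\zzz$ the elements $r(h)+\alpha\,C.r(h)$ spanning $\nc$ resist exactly this reduction, so any correct argument must again invoke $\lambda\notin 2\zzz$ here, and yours does not say how. The paper avoids the issue with a separate device: having shown that any nonzero element of $\cc[h]$ (resp. $\cc[h]B$) in a submodule $N$ forces $1\in N$ (resp. $B\in N$), it handles $v=\gamma+g(h)B$ by noting $B.v=\gamma B+q(h)$ with $q=B.(g(h)B)\in\cc[h]$ nonzero of degree at least $2$, and, when $\gamma\neq0$, by forming $\frac1\gamma g(h)B.v-v\in\cc[h]$, thereby returning to the polynomial case. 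Without (i) the two-operator degree-reduction lemma together with the computation showing simultaneous failure forces $\mu=4(n^2-n)$, and (ii) some such trick for eliminating one sheet from a mixed element, your plan is a correct outline of the strategy but not yet a proof.
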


	\begin{thm}[Main Theorem 2]\label{mc not simple}
		Let $ \lambda\in 2\zzz $, then $ \mc $ has a unique maximal (graded) submodule $ \nc $ such that $ \nc=P\oplus Q $, where $ P $ and $ Q $ are simple $ \s $-module of rank $ 1 $.
	\end{thm}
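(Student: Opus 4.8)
The plan is to first reduce to the case $\lambda = 2n$ with $n \ge 0$: since $I_\lambda$ is generated by $c - (\lambda+1)^2$, it depends only on $(\lambda+1)^2$, so $\mc = M^{C}_{-\lambda - 2}$, and $2\zzz$ is invariant under $\lambda \mapsto -\lambda - 2$. Next one makes the $\s$-action on $\mc$ explicit. Writing $x = \tfrac12(B+C)$ and $y = \tfrac12(B-C)$, so that $\overline{x} = \overline{y}$ and $\overline{B} = 2\overline{x}$ in $\mc$, and using the relations $xq(h) = q(h-2)x$, $x^2 \equiv xy = p(h-2)$, $y^2 \equiv yx = p(h)$ recorded earlier, one finds $\mc = \cc[h]\,\overline{1} \oplus \cc[h]\,\overline{x}$ with (abbreviating $f(h)\overline{1} + g(h)\overline{x}$ as $(f,g)$)
\[
h\cdot(f,g) = (hf,\,hg), \qquad x\cdot(f,g) = \bigl(g(h-2)p(h-2),\; f(h-2)\bigr), \qquad y\cdot(f,g) = \bigl(g(h+2)p(h),\; f(h+2)\bigr),
\]
and a short check recovers $[h,x] = 2x$, $[x,y] = h$ and $c \equiv (\lambda+1)^2$. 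Note that $p(h-2) = -\tfrac14(h - \lambda - 2)(h+\lambda)$.

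\emph{Step 1 (non-simplicity).} Let $L(2n)$ be the $(2n+1)$-dimensional irreducible $\s$-module of highest weight $2n$. The center acts on it through $c \mapsto (2n+1)^2 = (\lambda+1)^2$, so $L(2n)$ is a $\ui$-module, and $C$, being $\s$-conjugate to $\sqrt{-1}\,h$, acts on $L(2n)$ semisimply with eigenvalues $\sqrt{-1}\cdot\{2n, 2n-2, \dots, -2n\}$; as $2n$ is even, $0$ occurs, so there is $v_0 \ne 0$ in $L(2n)$ with $Cv_0 = 0$. Since $L(2n)$ is generated by $v_0$, sending $\overline{1} \mapsto v_0$ defines a surjection $\mc \twoheadrightarrow L(2n)$. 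Its kernel, which we call $\nc$, is nonzero (compare dimensions) and proper, hence a maximal submodule; in particular $\mc$ is not simple.

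\emph{Step 2--3 (the rank-one submodules and the direct sum).} From the action formulas, for $w = (a,b) \in \mc$ with $\gcd(a,b) = 1$ the line $\cc[h]\,w$ is an $\s$-submodule --- necessarily torsion-free of rank $1$ --- precisely when $a(h)a(h-2) = b(h)b(h-2)p(h-2)$, and then $x\cdot w = \tfrac{a(h-2)}{b(h)}\,w$ and $y\cdot w = \tfrac{a(h+2)}{b(h)}\,w$. I would solve this identity with the explicit polynomials
\[
a_\lambda(h) = \prod_{k=0}^{n}(h + 2n - 4k), \qquad b_\lambda(h) = \prod_{k=0}^{n-1}(h + 2n - 2 - 4k)
\]
(of degrees $n+1$ and $n$, with $\gcd(a_\lambda, b_\lambda) = 1$), for which $a_\lambda(h)a_\lambda(h-2)$ and $b_\lambda(h)b_\lambda(h-2)p(h-2)$ are both, up to a nonzero scalar $\kappa$, equal to $\prod\{\,h - e : e \text{ even},\ -2n \le e \le 2n+2\,\}$. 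Choosing $\delta$ with $\delta^2 = \kappa^{-1}$, the vectors $w_\pm = (\pm\,\delta\, a_\lambda,\; b_\lambda)$ satisfy the identity, so $\wc := \cc[h]\,w_+$ and $\wcc := \cc[h]\,w_-$ are rank-one submodules. Since $b_\lambda$ divides $a_\lambda(h-2)$ (resp.\ $a_\lambda(h+2)$) with quotient a scalar multiple of $h - \lambda - 2$ (resp.\ $h + \lambda + 2$), $\wc$ is isomorphic to the module carried by $\cc[h]$ with $x\colon f \mapsto c_1(h - \lambda - 2)f(h-2)$ and $y\colon f \mapsto c_2(h + \lambda + 2)f(h+2)$, and similarly for $\wcc$. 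Such a module is simple: a proper nonzero submodule would have the form $g(h)\,\cc[h]$ for a nonconstant $g$ with $g(h) \mid (h-\lambda-2)g(h-2)$ and $g(h) \mid (h+\lambda+2)g(h+2)$, but following any root of $g$ by steps $-2$ to the single root $\lambda+2$ of the first cofactor and by steps $+2$ to the single root $-\lambda-2$ of the second forces every root of $g$ into the interval $[\lambda+2,\,-\lambda-2]$, which is empty for $\lambda \ge 0$. Hence $\wc, \wcc$ are simple of rank $1$. As $w_+, w_-$ are $\cc[h]$-independent, $\wc \cap \wcc = 0$; being simple and infinite-dimensional, $\wc$ and $\wcc$ map to $0$ in the finite-dimensional $L(2n)$, so $\wc \oplus \wcc \subseteq \nc$. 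Finally, regarding $\mc \cong \cc[h]^2$ and $\wc \oplus \wcc$ as the $\cc[h]$-sublattice spanned by $w_+, w_-$, the $\cc$-codimension of $\wc \oplus \wcc$ is the degree of the determinant of their coordinate matrix, namely $\deg(2\delta\, a_\lambda b_\lambda) = (n+1) + n = 2n+1 = \dim L(2n)$, which is the codimension of $\nc$; since $\wc \oplus \wcc \subseteq \nc$, they coincide.

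\emph{Step 4 (uniqueness) and the main obstacle.} From the series $0 \subset \wc \subset \wc \oplus \wcc = \nc \subset \mc$ the composition factors of $\mc$ are $\wc$, $\wcc$, $L(2n)$, so any simple quotient $S$ of $\mc$ is isomorphic to one of these; but $\mc$ is cyclic on $\overline{1}$ with $C\,\overline{1} = 0$, so $S$ must contain a nonzero vector killed by $C$. The module $\wc$ (likewise $\wcc$) contains none, for a nonzero $f \in \cc[h] \cong \wc$ with $(x-y)f = 0$ would satisfy $c_1(h-\lambda-2)f(h-2) = c_2(h+\lambda+2)f(h+2)$, and comparing leading coefficients and, when these agree, the extreme roots of the two sides forces $\lambda \le -4$ (or $f$ constant and $\lambda = -2$), excluded since $\lambda \ge 0$. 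Hence $S \cong L(2n)$, so $\nc$ is the unique maximal submodule and $\nc = \wc \oplus \wcc$ is a direct sum of two simple $\s$-modules of rank $1$, as required. The main obstacle is Step 2, namely producing $a_\lambda, b_\lambda$ and verifying the required divisibilities together with $a_\lambda(h)a_\lambda(h-2) = \kappa\, b_\lambda(h)b_\lambda(h-2)p(h-2)$ --- equivalently, that the two root multisets coincide; the remaining steps are routine bookkeeping with the $\cc[h]$-module structure, with some care needed for the root-chasing arguments of Steps 3 and 4, which is precisely where the parity of $\lambda$ (the hypothesis $\lambda \in 2\zzz$ rather than merely $\lambda \in \zzz$) is used.
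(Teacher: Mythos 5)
Your proposal is correct, but it takes a genuinely different route from the paper's. The paper first isolates a distinguished monic polynomial $r(h)$ through the eigen-relations $C^{2}.r(h)\equiv-4n^{2}r(h)$, $CB.r(h)\equiv 2(n+1)hr(h)$ (extracted from the induction of Lemma \ref{1inN}), sets $\nc=\cc[h]r(h)\oplus\cc[h](C.r(h))$, verifies by computation that this is a submodule, and then runs a case analysis on an arbitrary nonzero submodule to show that $\mc$ has exactly three proper submodules $\nc,\wc,\wcc$; the finite-dimensional quotient $\mc/\nc\cong V(2n-2)$ appears only afterwards as a remark. You reverse this logic: after the harmless reduction $\lambda\mapsto-\lambda-2$, you obtain a maximal submodule at once as the kernel of $\mc\twoheadrightarrow L(\lambda)$, $\bar1\mapsto v_0$ with $Cv_0=0$ (this is exactly where evenness of $\lambda$ enters, since $0$ must be an $h$-weight); you then produce the rank-one submodules explicitly from the factored polynomials $a_\lambda,b_\lambda$ solving $a(h)a(h-2)=b(h)b(h-2)p(h-2)$ (your $w_\pm$ are, up to scalar and the basis change $B=2x$, the paper's $r(h)\pm\tfrac{i}{2n}C.r(h)$), prove their simplicity by a clean root-chasing argument, and identify $\wc\oplus\wcc$ with the kernel by the Smith-normal-form count $\deg\det=2n+1=\dim L(\lambda)$ --- thereby avoiding the paper's long inductions and case analysis; your key identity on $a_\lambda,b_\lambda$, which you flag as the main obstacle, is indeed true (the two root multisets are both the even integers in $[-\lambda,\lambda+2]$, each simple, with $\kappa=-4$), so the obstacle is only bookkeeping. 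What the paper's heavier analysis buys in exchange is a complete list of all proper submodules of $\mc$ (exactly three), which your argument does not directly yield. One step you should make explicit: passing from ``every simple quotient of $\mc$ is isomorphic to $L(\lambda)$'' to uniqueness of the maximal submodule needs one more line --- either observe that the $C$-null space of $L(\lambda)$ is one-dimensional and $\mc$ is cyclic on $\bar1$, so all surjections $\mc\to L(\lambda)$ have the same kernel, or (simpler, and using only what you already proved) that any maximal submodule has finite-dimensional quotient and hence contains the infinite-dimensional simples $\wc,\wcc$, i.e.\ contains $\nc$, which is itself maximal; with that line added, and noting that uniqueness makes $\nc$ automatically graded, your proof is complete.
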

	
	Before we prove the above theorems, we will start with some necessary calculations and relations.
	
	Let $ f({h})\in \cc[h]\subset\mc $, then

	 \begin{eqnarray}\label{cf}
	C.f({h})&=&(x-y)f({h})\nonumber\\&=&f(h-2)x-f(h+2)y\nonumber\\&\equiv& \frac{1}{2}(f({h}-2)-f({h}+2)){B}.
	\end{eqnarray}
	
		 \begin{eqnarray}\label{bf}
	B.f({h})&=&(x+y)f({h})\nonumber\\&=&f(h-2)x+f(h+2)y\nonumber\\&\equiv& \frac{1}{2}(f({h}-2)+f({h}+2)){B}.
	\end{eqnarray}
	
	 Note that \begin{align}\label{xb}
	xB&=\frac{1}{2}(B^{2}+CB)\nonumber\\&=\frac{1}{2}(B^{2}+BC+2h)\nonumber\\&\equiv \frac{1}{2}(\mu-{h}^{2}+2{h}),
	\end{align}
	 and 
	 \begin{align}\label{yb}
	 yB&=\frac{1}{2}(B^{2}-CB)\nonumber\\&=\frac{1}{2}(B^{2}-BC-2h)\nonumber\\&\equiv \frac{1}{2}(\mu-{h}^{2}-2{h}).
	\end{align}
Using (\ref{cf}), (\ref{xb}), and (\ref{yb}), we write

\begin{align*}
 C^{2}.f({h})=&\frac{1}{2}(x-y) (f({h}-2)-f({h}+2)){B}\nonumber\\
=&\frac{1}{2}(f(h-4)xB-f(h)xB-f(h)yB+f(h+4)yB)\nonumber\\\equiv& \frac{1}{4}\big((-{h}^{2}+2{h}+\mu)f({h}-4)-2(\mu-{h}^{2})f({h})\\  &+(-{h}^{2}-2{h}+\mu)f({h}+4)\big)
\end{align*}
Hence 
\begin{eqnarray}\label{c2f}
 &C^{2}.f({h})\equiv -\frac{1}{4}\big(({h}^{2}-2{h}-\mu)f({h}-4)\\&+2(\mu-{h}^{2})f({h})+({h}^{2}+2{h}-\mu)f({h}+4)\big).\nonumber
\end{eqnarray} 
In particular, if $ f(h) =h^{n}$, then 
\begin{align*}
 (h^{2}-&2h-\mu)(h-4)^{n}=h^{n+2}-2(2n+1)h^{n+1}\\&+\sum_{k=0}^{n-2}\bigg(-\mu {n\choose k}(-4)^{k}-2{n\choose k+1}(-4)^{k+1}\\&+{n\choose k+2}(-4)^{k+2}\bigg)h^{n-k}\\&+(-\mu n(-4)^{n-1}-2(-4)^{n})h-\mu(-4)^{n}. 
\end{align*}  
and 
 
 \begin{align*}
  (h^{2}+&2h-\mu)(h+4)^{n}=h^{n+2}-2(2n+1)h^{n+1}\\&+\sum_{k=0}^{n-2}\bigg(-\mu {n\choose k}(4)^{k}+2{n\choose k+1}(4)^{k+1}\\&+{n\choose k+2}(4)^{k+2}\bigg)h^{n-k}\\&+(-\mu n(4)^{n-1}+2(4)^{n})h-\mu(4)^{n}.
 \end{align*}
  Hence
  \begin{align}\label{c2hn}  C^{2}.{h}^{n}&\equiv a_{n}h^{n}+a_{n-2}h^{n-2}+\cdots+a_{0}\\ \text{where } \nonumber\\
 a_{n}&=-4n^{2}\nonumber\\ \text{and for  }k&=2,4,\ldots,k-3\ (\text{or }k-2)\nonumber\\
 a_{n-k}&=-\frac{1}{2}\bigg(-\mu {n\choose k}4^{k}+2{n \choose k+1}4^{k+1}+{n\choose k+2}4^{k+2}\bigg). \nonumber\end{align}
All other coefficients equal zero.

 \medskip
 
 One more relation that we will need refers to $ CB.{h}^{n} $.
 
 \smallskip

 Let $ f({h})\in \cc[h]\subset\mc $, then using (\ref{bf}), (\ref{xb}), and (\ref{yb}), we can  write 
\begin{align*}
CB.f({h})=&\frac{1}{2}(x-y) (f({h}-2)+f({h}+2)){B}\nonumber\\&
\equiv\frac{1}{2}(f(h-4)xB+f(h)xB-f(h)yB-f(h+4)yB).
\end{align*}
Hence 
\begin{equation}\label{cbf}
CB.f({h})\equiv \frac{1}{4}\big((-{h}^{2}+2{h}+\mu)f({h}-4)+4{h}f({h})+({h}^{2}+2{h}-\mu)f({h}+4)\big).
\end{equation} 
In particular, if $ f(h) =h^{n}$, then  
\begin{align*}
 (-h^{2}+&2h+\mu)(h-4)^{n}=-h^{n+2}+2(2n+1)h^{n+1}\\&+\sum_{k=0}^{n-2}\bigg(\mu {n\choose k}(-4)^{k}+2{n\choose k+1}(-4)^{k+1}\\&-{n\choose k+2}(-4)^{k+2}\bigg)h^{n-k}\\&+\big(\mu n(-4)^{n-1}+2(-4)^{n}\big)h+\mu(-4)^{n},
\end{align*} 
and
\begin{align*} (h^{2}+&2h-\mu)(h+4)^{n}=h^{n+2}+2(2n+1)h^{n+1}+\\&\sum_{k=0}^{n-2}\bigg(-\mu {n\choose k}(4)^{k}+2{n\choose k+1}(4)^{k+1}+{n\choose k+2}(4)^{k+2}\bigg)h^{n-k}\\&+\big(-\mu n(4)^{n-1}+2(4)^{n}\big)h-\mu(4)^{n}.\end{align*}

 Hence 
 \begin{align}\label{cbhn}
CB.{h}^{n}&\equiv a_{n+1}h^{n+1}+a_{n-1}h^{n-1}+\cdots+a_{1}h\\ \text{where } \nonumber\\
 a_{n+1}&=2(n+1)\nonumber\\ \text{and for  }k&=1,3,\ldots,k-2\ (\text{or }k-1)\nonumber\\
 a_{n-k}&=\frac{1}{2}\bigg(-\mu {n\choose k}4^{k}+2{n \choose k+1}4^{k+1}+{n\choose k+2}4^{k+2}\bigg).\nonumber\end{align}

  \begin{lem}\label{1inN}
 	Let $ \lambda\in \cc \setminus2\zzz $. Consider  a non-zero submodule $ N $ of $ \mc $. If there is a non-zero element $ 0\neq f(h) \in N$, then $ 1\in N $. 
 \end{lem}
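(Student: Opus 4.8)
The plan is to show that $N\cap\cc[h]$ is all of $\cc[h]$, which immediately yields $1\in N$. Since $h.f(h)=hf(h)$ again lies in $\cc[h]\subset\mc$ for every $f\in\cc[h]$, the subspace $N\cap\cc[h]$ is stable under multiplication by $h$, so it is a nonzero ideal of $\cc[h]$; write $N\cap\cc[h]=(g)$ with $g$ monic of degree $n$. I will assume $n\ge1$ and derive a contradiction.

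First I would pin down $g$. By \eqref{c2f} the map $f\mapsto C^2.f$ carries $\cc[h]$ into itself, and by \eqref{c2hn} it is triangular in the basis $1,h,h^2,\dots$ with distinct diagonal entries $-4k^2$ and preserves the parity of the degree. Since $N$ is a submodule, $C^2.g\in N\cap\cc[h]=(g)$; it has degree $n$, so comparing leading coefficients forces $C^2.g=-4n^2g$, whence $g$ is, up to a scalar, the unique eigenvector for the eigenvalue $-4n^2$ — in particular a polynomial of pure parity $n$. Likewise, by \eqref{cbf} and \eqref{cbhn}, $CB.g\in(g)$ has degree $n+1$, leading coefficient $2(n+1)$, and pure parity $n+1$; writing $CB.g=q(h)\,g$ with $\deg q=1$ and comparing leading coefficients, $q(h)=2(n+1)h+\beta$, and the parity of $CB.g$ forces $\beta=0$, i.e. $CB.g=2(n+1)h\,g$.

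Now for the main point. Using the explicit formulas \eqref{c2f} and \eqref{cbf}, I would add the relations $C^2.g=-4n^2g$ and $CB.g=2(n+1)h\,g$: the terms in $g(h+4)$ cancel and there remains, as an identity in $\cc[h]$,
\[
(\mu-h^2+2h)\,g(h-4)=\bigl(\mu-h^2+(4n+2)h-8n^2\bigr)g(h),
\]
and subtracting the two relations the terms in $g(h-4)$ cancel, leaving
\[
(\mu-h^2-2h)\,g(h+4)=\bigl(\mu-h^2-(4n+2)h-8n^2\bigr)g(h),
\]
where $\mu=\lambda^2+2\lambda$. Since $\mu-h^2\pm2h=(\lambda+1)^2-(h\mp1)^2$, the coefficient $\mu-h^2-2h$ vanishes at $h=\lambda$; evaluating the second identity at $h=\lambda$ therefore gives $0=-4n(\lambda+2n)\,g(\lambda)$, and as $n\ge1$ and $\lambda\notin2\zzz$ we have $\lambda\ne-2n$, so $g(\lambda)=0$. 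Feeding this into the first identity at $h=\lambda-4j$, whose left coefficient equals $(4j+2)(2\lambda-4j)$ and is nonzero because $\lambda\ne2j$ (again $\lambda\notin2\zzz$), a straightforward induction on $j\ge0$ shows $g(\lambda-4j)=0$ for every $j\ge0$. Thus the nonzero polynomial $g$ has infinitely many roots, a contradiction; hence $n=0$ and $1\in N$.

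The step I expect to be the real obstacle is exactly the one where the naive argument stalls: one would like to apply $C^2+4n^2$ to an element of $N\cap\cc[h]$ to produce one of strictly smaller degree, but this fails precisely when the generator $g$ is a $C^2$-eigenvector. Overcoming this requires bringing in the $CB$-action and the functional-equation/root-propagation argument above, and it is there that the hypothesis $\lambda\notin2\zzz$ becomes essential, through the non-vanishing of $-4n(\lambda+2n)$ and of the prefactors $(4j+2)(2\lambda-4j)$.
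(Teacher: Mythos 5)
Your proof is correct, and while it runs on the same computational backbone as the paper's (the formulas \eqref{c2f}, \eqref{cbf}, \eqref{c2hn}, \eqref{cbhn} and the interplay of the $C^2$- and $CB$-actions on $\cc[h]\subset\mc$), the decisive step is genuinely different. The paper argues by induction on $\deg f$: it forms $v_1=C^2.v+4n^2v$ and $v_2=CB.v-2(n+1)h\,v$ to lower the degree, and when both vanish it compares the coefficient of $h^{n-2}$ in two ways (its \eqref{a2byc2} versus \eqref{a2bycb}), forcing $\mu=4(n^2-n)$, i.e.\ $\lambda\in\{-2n,2n-2\}\subset 2\zzz$, with the small degrees $n=1,2,3$ handled separately. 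You instead exploit that $N\cap\cc[h]$ is an ideal of $\cc[h]$, pin down its monic generator $g$ as a simultaneous solution of $C^2.g=-4n^2g$ and $CB.g=2(n+1)h\,g$ (the triangularity/parity argument for ruling out the constant $\beta$ is fine), and then combine these into the two functional equations relating $g(h\mp4)$ to $g(h)$; evaluating at $h=\lambda$ and propagating along $h=\lambda-4j$ produces infinitely many roots of $g$, a contradiction. I checked the two identities and the factorizations $-4n(\lambda+2n)$ and $(4j+2)(2\lambda-4j)$; they are correct, and the hypothesis $\lambda\notin2\zzz$ enters exactly where you say. Your route avoids the binomial-coefficient bookkeeping and the separate low-degree cases, and it makes the role of $\lambda\notin2\zzz$ more transparent; the paper's more explicit coefficient computation has the side benefit of exhibiting the precise obstruction $\mu=4(n^2-n)$ and the coefficient formulas that are reused afterwards (Corollary \ref{uni of r(h)} and the analysis for $\lambda\in2\zzz$), which your argument does not provide.
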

\begin{proof}
	Let $ 0\neq v=f({h})\in N $; our claim is to show that $ {1}\in N $.
	
We will prove this fact by induction on the degree of $ f({h}) $. For the base of induction, we will consider $ i=1 $, let $ 0\neq v={h}+a\in N $, where $ a\in \cc $. If $ a\neq 0 $ then
 \begin{align*}
C^2.v\equiv-4h \in N\end{align*}  which implies that $ 0\neq a \in N $. If $ a=0 $, then $ v=h\in N $ and   
\begin{align*}
CB.v&\equiv 4h^2-2\mu\in N, \end{align*} and since $ {h} \in N$ then $ {h}^{2}\in N $, hence $ 0\neq \mu \in N  $, hence $ {1}\in N  $.

 For the  induction step, let \[ v=f({h})={h}^{n}+b_{1}{h}^{n-1}+b_{n-2}{h}^{n-2}+\cdots+b_{0} \in N.\] Using (\ref{c2hn}), for $ n>3$, we have  
 
  \begin{align*}
C^{2}.v\equiv&- \frac{1}{2}\bigg(8n^{2}{h}^{n}+\bigg(-\mu{n \choose 2}4^{2}+2{n \choose 3}4^{3}+{n\choose 4}4^{4}\bigg){h}^{n-2}\\&+\cdots+8b_{n-1}(n-1)^{2}{h}^{n-1}+ \cdots+8b_{n-2}(n-2)^{2}{h}^{n-2}+\cdots\bigg).
\end{align*}
For $ n=2,3 $ we have 
\begin{align*}
C^{2}.(h^{2}+b_{1}h+b_{0})&\equiv-16h^{2}-4b_{1}h+8\mu \\
C^{2}(h^{3}+b_{2}h^{2}+b_{1}h+b_{0})&\equiv-36h^{3}+(24\mu-64-4b_{1})h+8b_{2}\mu 
\end{align*}
Let $v_{1}= f_{1}({h})=C^{2}.v+4n^{2}.v \in N$, then $f_{1}({h}) $ is a polynomial of degree less than $ n $. If $ C^{2}.v\neq-4n^{2}.v $, then we have found a non-zero polynomial of degree less than $ n $ in $ N $, so induction applied to $ f_{1}(h) $. Otherwise, we have  \[-4n^{2}b_{n-1}=-4(n-1)^{2}b_{n-1},\] which implies that $ b_{n-1}=0 $. For $ n>3 $ we have 
\[
-\mu{n \choose 2}4^{2}+2{n \choose 3}4^{3}+{n\choose 4}4^{4}+8b_{n-2}(n-2)^{2}=8b_{n-2}n^{2}
\]
hence
\begin{align*} 
&-\mu{n \choose 2}4^{2}&+2{n \choose 3}4^{3}+{n\choose 4}4^{4}+8b_{n-2}n^{2}-32b_{n-2}n+32b_{n-2}\\
&=8b_{n-2}n^{2},
\end{align*} 
which implies that  
\begin{equation}\label{a2byc2}
b_{n-2}=\frac{-\mu{n \choose 2}+8{n \choose 3}+16{n\choose 4}}{2(n-1)}.
\end{equation}  
For $ n=2 $ we have 
\begin{align*}
b_{0}=-\frac{1}{2}\mu.
\end{align*} 
For $ n=3 $ we have 
\begin{align*}
b_{1}=\frac{8-3\mu}{4}.
\end{align*} 
Now if we have failed to produce a non-zero polynomial of degree less than $ n $ using $ v_{1} $, we can use another action to get such element. Using (\ref{cbhn}), for $ n>2 $, we have 
\begin{align*}
CB.v\equiv&\frac{1}{2}\bigg(4(n+1)h^{n+1}+\bigg(-4n\mu+2{n\choose 2}4^{2}+{n \choose 3}4^{3}\bigg)h^{n-1}\\&+\cdots+4b_{n-2}(n-1)h^{n-1}+\cdots\bigg).
\end{align*}
For $ n=2 $ 
\begin{align*}
CB.(h^{2}+b_{0})\equiv 6h^{3}+(16-4\mu+2b_{0})h.
\end{align*} 

Now consider the element $ v_{2}=f_{2}({h})=CB.v-2(n+1)h.v $, then $ f_{2}({h}) $ is a polynomial of degree less than $ n $. If $ CB.v\neq2(n+1)h.v $, then we have found a non-zero polynomial of degree less than $ n $ in $ N $, so induction applied to $ f_{2}(h) $. Otherwise, for $ n>2  $  \[-4n\mu+2{n\choose 2}4^{2}+{n \choose 3}4^{3}+4b_{n-2}(n-1)=4(n+1)b_{n-2},\] which implies that 
\begin{equation}\label{a2bycb}
b_{n-2}=\frac{-n\mu +8{n \choose 2}+16{n \choose 3}}{2}.
\end{equation} 
For $ n=2 $ we have 
\begin{align}
b_{0}=4-\mu.
\end{align}
 In fact  $ v_{1} $ and $ v_{2} $ cannot be both zero. To see this, assume that $ v_{1} $ and $ v_{2} $ are both zero, then using (\ref{a2byc2}) and (\ref{a2bycb}), for $ n>3 $  we have  
 \[
 \frac{-\mu{n \choose 2}+8{n \choose 3}+16{n\choose 4}}{2(n-1)}=\frac{-n\mu +8{n \choose 2}+16{n \choose 3}}{2}.
 \] 
Multiplying both side by 12, we have  
\begin{align*}
-3n\mu +8n(n-2)+4n(n-2)(n-3)\\=-6n\mu +24n(n-1)+16n(n-1)(n-2)
\end{align*}
and so
\begin{align*} 
&&-3n\mu +4n^3-12n^2+8n&=-6n\mu+16n^3-24n^2+8n\\&\text{hence}
&3n\mu &=12n^3-12n, 	
\end{align*}
since $ n>0 $ 
\[
\mu  =4(n^2-n),
\] 
which implies that 
\begin{align}\label{n value}
\lambda=-2n,2n-2,
\end{align} 
which is not the case.

For $ n=2 $ we have  
\begin{align*}
-\frac{1}{2}\mu=4-\mu \mbox{ hence }
\mu=8.
\end{align*}
Then $\lambda=-4,2$  which is not the case. 

For $ n=3 $ we have  
\begin{align*}
\frac{8-3\mu}{4}=\frac{-3\mu+40}{2} \mbox{ hence }
\mu=24.
\end{align*} 
Then $\lambda=-6,4$, which is again not the case. Thus $ 1\in N $
\end{proof}

%
%

%
%
%
%
%

%
%
%

%
%
%
%
%
%
%
%
 
 \begin{cor}\label{uni of r(h)}
 	Let $ \lambda\in 2\zzz\ $, then there is a uniquely determined monic non-constant polynomial of degree $ n $ (as in (\ref{n value})), say $ r(h)\in \mc $, such that 
 	\begin{align}\label{rhact}
 	C^{2}.r(\h)&\equiv-4n^{2}r(\h)\nonumber\\ CB.r(\h)&\equiv2(n+1)\h r(\h)\nonumber\\BC.r(\h)&\equiv2n\h r(\h)\\B^{2}.r(\h)&\equiv(-\h^{2}-4n)r(\h).\        \quad \quad\Box\nonumber
 	\end{align} 
 \end{cor}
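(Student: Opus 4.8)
The plan is to write $r(\h)$ down in closed form, verify the four relations in \eqref{rhact} by substituting it into \eqref{c2f} and \eqref{cbf}, and then deduce uniqueness from the triangular shape of the $C^{2}$-action on $\cc[\h]$. First note that, since $\lambda\in 2\zzz$, the computation behind \eqref{n value} lets us take $\lambda=-2n$ or $\lambda=2n-2$ with $n$ the positive integer occurring there, and that in either case $\mu=\lambda^{2}+2\lambda=4n^{2}-4n=4(n^{2}-n)$; this identity for $\mu$ is used throughout.

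\textbf{Existence.} The cases $n=1$ (which forces $r(\h)=\h$) and $n=2$ (which forces $r(\h)=\h^{2}-4$) suggest taking
\[
r(\h)=\prod_{j=0}^{n-1}\bigl(\h+2n-2-4j\bigr),
\]
the monic degree-$n$ polynomial whose roots $-2n+2,-2n+6,\dots,2n-2$ form an arithmetic progression with common difference $4$. Two elementary facts drive the verification. First, the root set telescopes, so that in the field of fractions of $\cc[\h]$
\[
r(\h-4)=\frac{\h-2n-2}{\h+2n-2}\,r(\h),\qquad r(\h+4)=\frac{\h+2n+2}{\h-2n+2}\,r(\h).
\]
Second, since $\mu=4(n^{2}-n)$, the quadratics in \eqref{c2f} and \eqref{cbf} factor as $\h^{2}-2\h-\mu=(\h-2n)(\h+2n-2)$ and $\h^{2}+2\h-\mu=(\h-2n+2)(\h+2n)$. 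Substituting both observations into \eqref{c2f} and \eqref{cbf}, the factors $\h+2n-2$ and $\h-2n+2$ cancel the denominators, and collecting terms gives $C^{2}.r(\h)\equiv-4n^{2}r(\h)$ and $CB.r(\h)\equiv 2(n+1)\h\,r(\h)$. The remaining two relations are then formal: $[C,B]=2\h$ in $\s$ gives $BC\equiv CB-2\h$ on $\mc$, whence $BC.r(\h)\equiv 2(n+1)\h\,r(\h)-2\h\,r(\h)=2n\h\,r(\h)$; and the Casimir identity $B^{2}-C^{2}=\mu-\h^{2}$, which holds in $\ui$, gives $B^{2}.r(\h)\equiv C^{2}.r(\h)+(\mu-\h^{2})r(\h)=(-4n^{2}+4n^{2}-4n-\h^{2})r(\h)=(-\h^{2}-4n)r(\h)$.

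\textbf{Uniqueness.} Let $r'(\h)$ be any monic non-constant polynomial with $C^{2}.r'(\h)\equiv-4n^{2}r'(\h)$. By \eqref{c2hn}, $C^{2}.\h^{m}$ has degree $m$ and leading coefficient $-4m^{2}$, so comparing leading coefficients forces $\deg r'=n$. Writing $r'(\h)=\h^{n}+c_{n-1}\h^{n-1}+\dots+c_{0}$ and equating the coefficient of $\h^{n-k}$ on the two sides, the diagonal term from \eqref{c2hn} contributes $-4(n-k)^{2}c_{n-k}$, so $\bigl(4n^{2}-4(n-k)^{2}\bigr)c_{n-k}$ equals a fixed expression in $c_{n},\dots,c_{n-k+1}$; as $4n^{2}-4(n-k)^{2}=4k(2n-k)\neq 0$ for $1\le k\le n$, the coefficients of $r'$ are determined recursively, so $r'=r$. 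Hence $r(\h)$ is the unique monic polynomial satisfying \eqref{rhact}.

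I expect the only non-routine step to be recognizing the closed form of $r(\h)$ from the low-degree cases; after that, the hypothesis $\lambda\in 2\zzz$ enters precisely by making $\h^{2}\mp 2\h-\mu$ factor over $\zzz$ so that the denominators cancel, and the rest is a short manipulation. A secondary point to keep straight is that only the $C^{2}$- and $CB$-relations require genuine computation, the other two following from $[C,B]=2\h$ and from $c=(\lambda+1)^{2}$ in $\ui$.
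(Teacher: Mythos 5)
Your proof is correct, but it takes a genuinely different route from the paper's. The paper never writes $r(h)$ in closed form: Corollary \ref{uni of r(h)} is extracted from the induction step in the proof of Lemma \ref{1inN}, where the simultaneous vanishing of $v_{1}=C^{2}.v+4n^{2}v$ and $v_{2}=CB.v-2(n+1)h.v$ pins down the coefficients recursively (as in \eqref{a2byc2}, \eqref{a2bycb}) and is possible only when $\mu=4(n^{2}-n)$, i.e. $\lambda\in2\ZZ$; the last two relations of \eqref{rhact} are then obtained from the first two via the computation \eqref{B.r(h)}, as the remark following it records. You instead exhibit $r(h)=\prod_{j=0}^{n-1}(h+2n-2-4j)$ and verify the $C^{2}$- and $CB$-relations directly: the factorizations $h^{2}-2h-\mu=(h-2n)(h+2n-2)$ and $h^{2}+2h-\mu=(h-2n+2)(h+2n)$ together with your telescoping identities for $r(h\pm4)$ reduce \eqref{c2f} and \eqref{cbf} to the polynomial identities $(h-2n)(h-2n-2)+(h+2n)(h+2n+2)+2(\mu-h^{2})=16n^{2}$ and $-(h-2n)(h-2n-2)+(h+2n)(h+2n+2)+4h=(8n+8)h$, which check out; your derivation of the $BC$- and $B^{2}$-relations from $[C,B]=2h$ and $B^{2}-C^{2}=\mu-h^{2}$ in $\ui$ is simpler than the paper's detour through \eqref{B.r(h)}; and your uniqueness argument from the triangularity of the $C^{2}$-action on $\cc[h]$ (leading coefficient $-4m^{2}$ on $h^{m}$, with $4n^{2}-4(n-k)^{2}=4k(2n-k)\neq0$) is complete and in fact uses only the first relation, which suffices once existence is shown. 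What each approach buys: yours gives an explicit $r(h)$ with transparent root structure $-2n+2,-2n+6,\dots,2n-2$, consistent with the paper's data (e.g. \eqref{a2byc2} gives $b_{n-2}=-16$ for $n=3$, matching $h^{3}-16h$), and is self-contained; the paper's obtains existence and uniqueness as a by-product of the simplicity analysis it must carry out anyway, without having to guess a formula.
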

 \begin{cor}\label{pru cor}
 	Let $ \lambda\in 2\zzz $, and $ r(h) $ as in Corollary \ref{uni of r(h)}. Let $ N $ be any non-zero submodule of $ \mc $. If $ f(h)\in N $ with $ f(h)\neq \gamma r(h) $ for all $ \gamma \in \cc $, then 
 	\begin{enumerate}
 		\item If $ \deg(f)\leq n $, then $ N=\mc $.
 		\item If $ \deg(f)> n $, then either $ r(h)\in N $ or $ N=\mc $ .$ \qquad \Box $
 	\end{enumerate} 
 \end{cor}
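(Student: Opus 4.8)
The plan is to deduce both parts from a single \emph{degree-reduction} principle, together with the trivial observation that the class of $1$ generates $\mc$ as a module over $\ui$, so that $1\in N$ already forces $N=\mc$. The principle I want to establish is: \emph{if $f(h)\in N$ is nonzero, $\deg f=k\ge 1$, and $f(h)\ne\gamma r(h)$ for every $\gamma\in\cc$, then either $1\in N$, or $N$ contains a nonzero polynomial of degree strictly less than $k$.} Granting this, Part~(1) follows by induction on $\deg f$: applying the principle (legitimately, since $\deg f\le n=\deg r$ means that after the first reduction all polynomials that appear have degree $<n$, hence are never scalar multiples of $r(h)$), we either reach $1\in N$ or descend to a nonzero constant, and in both cases $N=\mc$. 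For Part~(2) we iterate the principle; each step either gives $1\in N$ (hence $N=\mc$) or strictly lowers the degree, so after finitely many steps the current polynomial is a nonzero constant (whence $N=\mc$), or a scalar multiple of $r(h)$ (whence $r(h)\in N$), or has degree $\le n$ without being a scalar multiple of $r(h)$ — and then Part~(1) gives $N=\mc$.

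To prove the reduction principle I follow the induction in the proof of Lemma~\ref{1inN}. After rescaling I may take $f$ monic. Suppose first that $k\ge 2$ and form the elements of $N$
\[
v_1:=C^{2}.f+4k^{2}f,\qquad v_2:=CB.f-2(k+1)h\,f .
\]
By (\ref{c2hn}) the action $C^{2}.h^{k}$ involves only the powers $h^{k},h^{k-2},\dots$ with leading coefficient $-4k^{2}$, and by (\ref{cbhn}) the action $CB.h^{k}$ involves only $h^{k+1},h^{k-1},\dots$ with leading coefficient $2(k+1)$; hence $v_1$ always has degree $<k$, and, just as in the proof of Lemma~\ref{1inN}, $v_1=0$ forces the coefficient of $h^{k-1}$ in $f$ to vanish, which in turn kills the coefficient of $h^{k}$ in $v_2$, so $\deg v_2<k$ as well. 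Thus, unless $v_1=v_2=0$, one of them is the required polynomial of smaller degree. If $v_1=v_2=0$, then $C^{2}.f\equiv-4k^{2}f$ and $CB.f\equiv 2(k+1)h\,f$; the coefficient computations from the proof of Lemma~\ref{1inN} (carried out there for degree $>3$ via the general formulas and for degrees $2$ and $3$ directly) force $\mu=4(k^{2}-k)$, and combined with $\mu=\lambda^{2}+2\lambda=4(n^{2}-n)$ this yields $(n-k)(n+k-1)=0$, hence $k=n$. Using the identities $BC=CB-2h$ and $B^{2}=C^{2}-h^{2}+\mu$ inside $\ui$ we then obtain $BC.f\equiv 2n\,h\,f$ and $B^{2}.f\equiv(-h^{2}-4n)f$ as well, so $f$ satisfies all of (\ref{rhact}); being monic of degree $n$, $f$ coincides with $r(h)$ by the uniqueness in Corollary~\ref{uni of r(h)}, contradicting $f\ne\gamma r(h)$. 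Finally the base case $k=1$ is handled directly: writing $f=h+a$, if $a\ne 0$ then $C^{2}.f\equiv-4h$ puts $h$, hence $a$, hence $1$ into $N$; if $a=0$ then $f=h$, and by (\ref{cbf}) one has $CB.h\equiv 4h^{2}-2\mu$, so $v_2=CB.h-4h^{2}\equiv-2\mu$ is a nonzero constant when $n>1$ (giving $1\in N$), whereas $n=1$ would mean $f=h=r(h)$, excluded by hypothesis.

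The one genuinely delicate point is the case $v_1=v_2=0$ in the reduction step: one has to check that the two eigenvector-type relations for $C^{2}$ and $CB$ already pin the degree of $f$ down to $n$ and then propagate to the full system (\ref{rhact}), so that the uniqueness of $r(h)$ from Corollary~\ref{uni of r(h)} can be applied; and one must watch the low-degree anomaly at $k=1$, where the vanishing of $v_1$ alone does not constrain $\mu$, so that the induction does not stall there. Everything else — the degree drop, the fact that $1$ generates $\mc$, and the two inductions — is routine.
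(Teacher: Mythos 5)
Your proof is correct and follows exactly the route the paper intends: the corollary is left as a boxed consequence of rerunning the degree-reduction argument from Lemma \ref{1inN}, with the only change being that the stalled case $v_1=v_2=0$ now forces $\mu=4(k^2-k)$, hence $k=n$ and (via $BC=CB-2h$, $B^2=C^2-h^2+\mu$ and the uniqueness in Corollary \ref{uni of r(h)}) $f=r(h)$, which is excluded by hypothesis or yields $r(h)\in N$. Your handling of the low-degree cases and of the $n=1$ anomaly matches the paper's computations, so no gap.
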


Note that if $ \lambda\in 2\zzz $, then the polynomial $ r(h) $ (as in (\ref{uni of r(h)})) has degree $ n $, where  \[n= \begin{cases}
\frac{-\lambda}{2} & \text{if }\lambda<0\\
\frac{\lambda+2}{2} & \text{if } \lambda\geq 0 
\end{cases} \]
Moreover, $ \lambda\in 2\zzz $ if and only if $ \mu =4(n^{2}-n) $.

 Next, we can argue in the same way as just above, to evaluate the action on the element $ g(h)B\in \mc $.\\
 Let $ g({h})B\in \cc[h]B\subset\mc $, then

 \begin{eqnarray}\label{cg}
 C.g({h})B&=&(x-y)g({h})B\nonumber\\&=&g(h-2)xB-g(h+2)yB\nonumber\\&\equiv& \frac{1}{2}\big((\mu-h^{2}+2h)g({h}-2)+(-\mu +h^{2}+2h)g({h}+2)\big).
 \end{eqnarray}
 
 Using (\ref{cg}), we can write

 \begin{align}
 C^{2}.g({h})B=&\frac{1}{2}(x-y) \big((\mu-h^{2}+2h)g({h}-2)\\ \nonumber
 +&(-\mu +h^{2}+2h)g({h}+2)\big)\nonumber\\
 =&\frac{1}{2}\big((\mu-8-h^{2}+6h)g(h-4)x+(-\mu+h^{2}-2h)g(h)x\nonumber\\&-(\mu-h^{2}-2h)g(h)y-(-\mu+8+h^{2}+6h)g(h+4)y\big)\nonumber\\\equiv& \frac{1}{4}\big((-{h}^{2}+6{h}+\mu-8)g({h}-4)+2({h}^{2}-\mu)g({h})\\  &-({h}^{2}+6{h}-\mu+8)g({h}+4)\big)B.\nonumber
 \end{align}
 In particular, if $ g(h) =h^{l}$, then 
 \begin{align}\label{c2hnB}  
 C^{2}.{h}^{l}B&\equiv(a_{l}h^{l}+a_{l-2}h^{l-2}+\cdots+a_{0})B\\ \text{where } \nonumber\\
 a_{l}&=-4(l+1)^{2}\nonumber\\ \text{and for  }k&=2,4,\ldots,k-3\ (\text{or }k-2)\nonumber\\
 a_{l-k}&=-\frac{1}{2}\bigg((8-\mu) {l\choose k}4^{k}+6{l \choose k+1}4^{k+1}+{l\choose k+2}4^{k+2}\bigg). \nonumber\end{align}
 
 Another relation that we will need refers to $ CB.({h}^{l}B) $.
 
 Let $ g({h})B\in \cc[h]B\subset\mc $; then
 \begin{align}\label{bg}
 B.g({h})B&\equiv\frac{1}{2}\big((\mu-h^{2}+2h)g({h}-2)-(-\mu+h^{2}+2h)g({h}+2)\big).
 \end{align}
 Using (\ref{xb}), (\ref{yb}), and (\ref{bg}), we can write 
 \begin{align}
 CB.g({h})=&\frac{1}{2}(x-y)\big((\mu-h^{2}+2h)g({h}-2)-(-\mu+h^{2}+2h)g({h}+2)\big)\nonumber\\&
 \equiv\frac{1}{2}\big((\mu-h^{2}+6h-8)g(h-4)-(-\mu+h^{2}-2h)+g(h)x\nonumber\\&-(-\mu-h^{2}-2h)g(h)y-(-\mu+h^{2}+6h+8)g(h+4)y\big).
 \end{align}
 Hence 
 \begin{eqnarray}\label{cbg}
 &CB.g({h})B\equiv \dfrac{1}{4}\big((-{h}^{2}+6{h}+\mu-8)g({h}-4)+4{h}g({h})\\&+({h}^{2}+6{h}-\mu+8)g({h}+4)\big)B.\nonumber
 \end{eqnarray} 
 In particular, if $ g(h) =h^{l}$, then   
 \begin{align}\label{cbhnB}
 CB.({h}^{l}B)&\equiv (a_{l+1}h^{l+1}+a_{l-1}h^{l-1}+\cdots+a_{1}h)B\\ \text{where } \nonumber\\
 a_{l+1}&=2(l+2)\nonumber\\ \text{and for  }k&=1,3,\ldots,k-2\ (\text{or }k-1)\nonumber\\
 a_{l-k}&=\frac{1}{2}\bigg((8-\mu) {l\choose k}4^{k}+6{l \choose k+1}4^{k+1}+{l\choose k+2}4^{k+2}\bigg).\nonumber\end{align}

 \begin{lem}\label{BinN}
 	Let $ \lambda\in \cc \setminus2\zzz $. Suppose $ N $ is a non-zero submodule of $ \mc $. If there is a non-zero element $ 0\neq g(h)B \in N$, then $ B\in N $. 
 \end{lem}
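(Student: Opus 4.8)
The plan is to follow, almost verbatim, the pattern of the proof of Lemma \ref{1inN}, replacing the scalar polynomials $f(h)\in\cc[h]\subset\mc$ by elements of $\cc[h]B\subset\mc$ and the relations (\ref{c2hn}), (\ref{cbhn}) by their $B$-companions (\ref{c2hnB}), (\ref{cbhnB}). First I would argue by induction on $\ell=\deg g$. The base case $\ell=0$ is immediate, since then $g(h)B$ is a nonzero scalar multiple of $B$; the low-degree cases $\ell=1,2,3$, where the general formulas for the subleading coefficients in (\ref{c2hnB})--(\ref{cbhnB}) degenerate, are to be disposed of by the same kind of explicit computation used for $n=1,2,3$ in Lemma \ref{1inN}.

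For the inductive step, write
\[
v=g(h)B=\big(h^{\ell}+b_{\ell-1}h^{\ell-1}+b_{\ell-2}h^{\ell-2}+\cdots+b_{0}\big)B\in N,\qquad \ell\ge 4 .
\]
Since $C^{2}$ preserves the $h$-degree and $C^{2}.(h^{\ell}B)\equiv -4(\ell+1)^{2}h^{\ell}B+(\text{lower})B$ by (\ref{c2hnB}), the element $v_{1}=C^{2}.v+4(\ell+1)^{2}v\in N$ has $h$-degree $<\ell$. If $v_{1}\neq 0$, the inductive hypothesis applied to $v_{1}$ gives $B\in N$ and we are done. If $v_{1}=0$, then comparing coefficients of $h^{\ell-1}B$ forces $b_{\ell-1}=0$, and comparing coefficients of $h^{\ell-2}B$ (using the $k=2$ coefficient of (\ref{c2hnB})) pins $b_{\ell-2}$ down as an explicit rational function of $\mu$ and $\ell$.

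With $b_{\ell-1}=0$ established, the relation $CB.(h^{\ell}B)\equiv 2(\ell+2)h^{\ell+1}B+(\text{lower})B$ from (\ref{cbhnB}) shows that $v_{2}=CB.v-2(\ell+2)\,h\,v\in N$ also has $h$-degree $<\ell$; if $v_{2}\neq 0$ we again finish by induction. If $v_{2}=0$, comparing coefficients of $h^{\ell-1}B$ (using the $k=1$ coefficient of (\ref{cbhnB})) gives a \emph{second} expression for $b_{\ell-2}$. Equating the two expressions and clearing denominators, the factor $\ell+1$ cancels and one is left with $\mu=4\ell(\ell+1)=4\big((\ell+1)^{2}-(\ell+1)\big)$; by the remark following Corollary \ref{pru cor} this forces $\lambda\in 2\zzz$ (indeed $\lambda\in\{-2(\ell+1),\,2\ell\}$), contradicting the hypothesis $\lambda\in\cc\setminus 2\zzz$. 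Hence at least one of $v_{1},v_{2}$ is nonzero, the induction goes through, and $B\in N$.

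I expect the only real difficulty to be the bookkeeping in the last step: reading off correctly from (\ref{c2hnB}) and (\ref{cbhnB}) the two subleading coefficients of $v_{1}$ and $v_{2}$, keeping the binomial coefficients straight, and verifying that the resulting value $\mu=4\ell(\ell+1)$ is exactly of the form making $\lambda$ an even integer. This is entirely parallel to the corresponding computation in Lemma \ref{1inN}, so the obstacle is computational rather than conceptual; separately, the small-degree cases $\ell\le 3$ (and the $\ell=0$ base) require their own short, routine verifications.
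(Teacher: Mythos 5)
Your proposal is correct and follows essentially the same route as the paper's own proof: induction on $\deg g$, using $v_{1}=C^{2}.v+4(\ell+1)^{2}v$ and $v_{2}=CB.v-2(\ell+2)h v$ built from (\ref{c2hnB}) and (\ref{cbhnB}), with vanishing of both forcing $\mu=4\ell(\ell+1)$ and hence $\lambda\in\{2\ell,-2(\ell+1)\}\subset 2\zzz$, a contradiction. The deferred bookkeeping does check out (the two expressions for $b_{\ell-2}$ are exactly the analogues of (\ref{a2byc2B}) and (\ref{a2bycbB})), and your treatment of the small degrees $\ell\le 3$ matches the paper's explicit low-degree computations.
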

 \begin{proof}
 	Let $ \lambda\in \cc \setminus2\zzz $ and assume that $ N $ is a nonzero submodule of $ \mc $. 
If $  0\neq u=g(h)B\in N $, then we are going to prove $ B\in N $. We will prove this fact by induction on the degree of $ g(h) $. For the base of induction, we will consider $ i=1 $. Let $ 0\neq u=({h}+a)B\in N $, where $ a\in \cc $. If $ a\neq 0 $ then \begin{align*}
 	C^2.u=(-16h-4a)B \in N,
 \end{align*}  
 which implies that $ 0\neq (12a)B \in N $. If $ a=0$, then $ u=hB\in N $ and  
  \begin{align*}
 	CB.u\equiv \big(6h^{2}+4(8-\mu)\big)B, 
 	\end{align*} 
 	and since $ {h} B\in N$ then $ {h}^{2}B\in N $. But $ \lambda  $ is not an even integer, so that $ 0\neq 4(8-\mu)B \in N  $, hence $ {B}\in N  $. 
 	
 	For the induction step, let $  u=g(h)B=\big({h}^{l}+b_{1}{h}^{l-1}+b_{l-2}{h}^{l-2}+\cdots+b_{0}\big)B \in N$. Using (\ref{c2hnB}), for $ l>3 $, we will have   
 	\begin{align*}
 	C^{2}.u\equiv&- \frac{1}{2}\bigg(8(l+1)^{2}{h}^{l}+\bigg((8-\mu){l \choose 2}4^{2}+6{l \choose 3}4^{3}+{l\choose 4}4^{4}\bigg){h}^{l-2}\\&+\cdots+8b_{l-1}(l)^{2}{h}^{l-1}+ \cdots+8b_{l-2}(l-1)^{2}{h}^{l-2}+\cdots\bigg).
 	\end{align*}
 	For $ l=2,3 $, we will have 
 	\begin{align*}
 	&C^{2}.(h^{2}+b_{1}h+b_{0})B\equiv\big(-36h^{2}-16b_{1}h-(8(8-\mu)+4b_{0})\big)B \\
 	&C^{2}(h^{3}+b_{2}h^{2}+b_{1}h+b_{0})B\equiv\big(-64h^{3}-36b_{2}h^{2}\\
 	&-(24(8-\mu)+192+16b_{1})h\\&-(8(8-\mu)b_{2}+4b_{0})\big)B 
 	\end{align*}
 	Let $u_{1}= f_{1}({h})B=C^{2}.u-(-4(l+1)^{2}.u) \in N$, then $f_{1}({h}) $ is a polynomial of degree less than $ l $. If $ C^{2}.u\neq-4(l+1)^{2}.u $, then we have found a non-zero polynomial of degree less than $ l $, so induction applied to $ u_{1} $. \\ Otherwise  \[-4(l+1)^{2}b_{l-1}=-4(l)^{2}b_{l-1},\] which implies that $ b_{l-1}=0 $, and for $ l>3 $ we have \begin{align*}
 	(8-\mu){l \choose 2}4^{2}+6{l \choose 3}4^{3}+{l\choose 4}4^{4}+8b_{l-2}(l-1)^{2}=8b_{l-2}(l+1)^{2},
 	\end{align*} 
 	which implies that  
 	\begin{equation}\label{a2byc2B}
 	b_{l-2}=\frac{(8-\mu){l \choose 2}+24{l \choose 3}+16{l\choose 4}}{2(l)}.
 	\end{equation}  
 	For $ l=2 $ we have
 	 \begin{align*}
 	b_{0}=\frac{8-\mu}{4}.
 	\end{align*} 
 	For $ l=3 $ we have 
 	\begin{align*}
 	b_{1}=\frac{16-\mu}{2}.
 	\end{align*} 
 	
 	Now if we failed to produce a non-zero element with polynomial has a degree less than $ l $ using $ u_{1} $, we can use another action to get such element. Using (\ref{cbhnB}), for $ l>2 $, we have
  \begin{align*}
 	CB.u\equiv&\frac{1}{2}\bigg(4(l+2)h^{l+1}+\bigg(4n(8-\mu)+6{l\choose 2}4^{2}+{l \choose 3}4^{3}\bigg)h^{l-1}\\&+\cdots+4b_{l-2}nh^{l-1}+\cdots\bigg).
 	\end{align*}
If $ l=2 $, we have
 \begin{align*}
 	CB.(h^{2}+b_{0})B\equiv\left( 8h^{3}+(48+4(8-\mu)+4b_{0})h\right) B
 	\end{align*} 
 	Now we can consider an element $ u_{2}=f_{2}({h})B=CB.u-2(l+2)h.u $. We have that $ f_{2}({h}) $ is a polynomial of degree less than $ l $. If $ CB.u\neq2(l+2)h.u $, then we have found a non-zero element in $ N $ with polynomial of degree less than $ l $, so our induction applied to $ u_{2} $. 
 	
 	Otherwise,  
 	\[
 	4n(8-\mu)+6{l\choose 2}4^{2}+{l \choose 3}4^{3}+4b_{l-2}(l)=4(l+2)b_{l-2},
 	\] 
 	which implies that
 	 \begin{equation}\label{a2bycbB}
 	b_{l-2}=\frac{(8-\mu)l +24{l \choose 2}+16{l \choose 3}}{2}.
 	\end{equation} 
 	For $ l=2 $ 
 	\begin{align}
 	b_{0}=20-\mu.
 	\end{align} 
 	In fact  $ u_{1} $ and $ u_{2} $ cannot be both zero. To see this, assume that $ v_{1} $ and $ v_{2} $ are both zero, then using (\ref{a2byc2B}) and (\ref{a2bycbB}), for $ l>3 $ we have  
 	 \[
 	 \frac{(8-\mu){l \choose 2}+24{l \choose 3}+16{l\choose 4}}{2(l)}=\frac{(8-\mu)l +24{l \choose 2}+16{l \choose 3}}{2}.,\]
 	  Multiplying both side by 12, we will have
 	\begin{align*} 
 	16n^{3}+24n^{2}+8n-6n\mu=4n^3-4n-3n\mu+3\mu 	
 	\end{align*}
 	which implies that 
 	\begin{align*}
 	(l+1)\mu=4n(l+1)^{2}.
 	\end{align*}
 	Since $ l>0 $, we have  $\mu  =4(l^2+l),$ which means that $\lambda$ is an even integer, and this is not the case. 
 	
 	For $ l=2 $ we have 
 	\begin{align*}
 	\frac{8-\mu}{4}=20-\mu,\mbox{ hence }\mu=24.
 	\end{align*} 
 	It follows that 
 	$\lambda=4,-6$, which is not the case. 
 	For $ l=3 $ we have
 	 \begin{align*}
 	\frac{16-\mu}{2}=\frac{-3\mu+112}{2},\mbox{ hence }
 	\mu=48,
 	\end{align*} 
 	which means that $\lambda=6,-8$, which is again not the case. Hence if $ 0\neq v=g(h)B \in  N $ then $ B\in N $.
 \end{proof}

 We can see from the previous proof that the equations $ u_{1}=0 $ and $ u_{2}=0 $ has unique solution when $ \lambda\in 2\zzz $. Moreover, if $ r=r(h)+g(h)B $ such that $ C^{2}.r=-4n^{2}r $ and $ CB.r=2(n+1)hr $ is satisfied, then $ g(h) $ has the degree $ l=n-1 $.
 \begin{cor}\label{uni of r(h)B}
 	Let $ \lambda\in 2\zzz $, then there is a uniquely determined monic polynomial $ r^{*}(h)\in \mc $ of degree $ l =n-1$ such that 
 	\begin{align}\label{rhactB}
 	C^{2}.r^{*}(h)B&=-4(l+1)^{2}r^{*}(h)B\nonumber\\ CB.r^{*}(\h)B&=2(l+2)\h r^{*}(h)B\nonumber\\BC.r^{*}(h)B&=2(l+1)\h r^{*}(\h)B\\B^{2}.r^{*}(h)B&=(-\h^{2}-4(l+1))r^{*}(h)B.\        \quad \quad\Box\nonumber
 	\end{align}
 	Note that the element $ C.r(h) $ satisfies the above conditions, where $ l=n-1 $, and since $ r^{*}(h) $ is uniquely determined, we can say that $ r^{*}(h)=C.(r(h)) $.  
 \end{cor}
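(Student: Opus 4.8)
The plan is to split the corollary into an existence statement and a uniqueness statement, running the argument in parallel with the one behind Corollary~\ref{uni of r(h)} but using the action formulas \eqref{c2hnB} and \eqref{cbhnB} in place of \eqref{c2hn} and \eqref{cbhn}. For existence I would avoid solving the coefficient recursion by hand; instead, as the concluding remark suggests, I would take the element $C.r(h)$, show it has the right shape, and deduce the four required relations from those already established for $r(h)$ in Corollary~\ref{uni of r(h)}.

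For the existence part: since $r(h)$ is monic of degree $n$, formula \eqref{cf} gives $C.r(h)\equiv\tfrac12\bigl(r(h-2)-r(h+2)\bigr)B\defby\widetilde g(h)B$, and comparing leading terms one sees that $\widetilde g$ has degree $n-1=l$ with leading coefficient $-2n\neq 0$ (recall $\lambda\in 2\zzz$ forces $n\geq 1$). Hence $r^{*}(h)\bydef-\tfrac1{2n}\widetilde g(h)$ is monic of degree $l$ and $r^{*}(h)B$ is a nonzero scalar multiple of $C.r(h)$, so it is enough to verify \eqref{rhactB} for $C.r(h)$ itself. I would also record $B.r(h)\equiv\tfrac12\bigl(r(h-2)+r(h+2)\bigr)B\defby\widehat g(h)B$ from \eqref{bf}. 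The one auxiliary identity I need is $\h\,\widetilde g(h)=-2n\,\widehat g(h)$ in $\cc[h]$, which I would obtain by computing $C.\bigl(BC.r(h)\bigr)=C.\bigl(2n\h\,r(h)\bigr)$ in two ways, using $CB=BC+2\h$ and Corollary~\ref{uni of r(h)}. Granting it, the four relations for $C.r(h)$ become short manipulations with the commutators of $B,C,\h$ in $\ui$: $C^{2}.(C.r(h))=C.(C^{2}.r(h))=-4n^{2}\,C.r(h)$; $BC.(C.r(h))=B.(C^{2}.r(h))=-4n^{2}\,B.r(h)=2n\h\,\widetilde g(h)B$; $CB.(C.r(h))=(BC^{2}+2\h C).r(h)=-4n^{2}\,\widehat g(h)B+2\h\,\widetilde g(h)B=2(n+1)\h\,\widetilde g(h)B$; and, writing $B^{2}C=CB^{2}-4\h B+4C$ and substituting $B^{2}.r(h)\equiv(-\h^{2}-4n)r(h)$ together with \eqref{cf}, the $\widehat g$-terms telescope away and one is left with $B^{2}.(C.r(h))\equiv(-\h^{2}-4n)\,C.r(h)$. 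Since $l+1=n$, these are precisely the relations \eqref{rhactB}.

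For the uniqueness part: suppose $r_{1}^{*}(h)B$ and $r_{2}^{*}(h)B$ are monic of degree $l$ and both satisfy \eqref{rhactB}. Then $w(h)B\bydef(r_{1}^{*}(h)-r_{2}^{*}(h))B$ has $\deg w<l$ and, by linearity, still satisfies $C^{2}.(w(h)B)\equiv-4n^{2}\,w(h)B$. By \eqref{c2hnB} the operator $C^{2}$ acts on $h^{l'}B$ with top coefficient $-4(l'+1)^{2}$, so if $w\neq 0$ then comparing leading coefficients on the two sides forces $(\deg w+1)^{2}=n^{2}=(l+1)^{2}$, i.e.\ $\deg w=l$, contradicting $\deg w<l$; hence $w=0$ and $r_{1}^{*}=r_{2}^{*}$. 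I expect the main obstacle to be the bookkeeping in the $B^{2}$-relation for $C.r(h)$, where \eqref{cf} must be applied to $(-\h^{2}-4n)r(h)$ and the cancellation of the $\widehat g$-contributions is least transparent; once the identity $\h\,\widetilde g=-2n\,\widehat g$ is available the rest is routine. Alternatively, one may imitate the proof of Lemma~\ref{BinN} directly and solve the recursion \eqref{a2byc2B}$=$\eqref{a2bycbB} for the coefficients of $r^{*}$; that route is more laborious but exhibits explicitly where the hypothesis $\lambda\in 2\zzz$ enters.
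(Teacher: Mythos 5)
Your proposal is correct in substance, and it is a more self-contained route than the paper's. The paper offers no separate proof of this corollary: uniqueness is delegated to the remark preceding it (the recursion equations (\ref{a2byc2B}) and (\ref{a2bycbB}) from the proof of Lemma \ref{BinN} have a unique solution when $\lambda\in 2\zzz$), existence is the unproved observation that $C.r(h)$ satisfies the conditions, and the passage from the first two relations to the last two is only justified later, by the remark following the lemma on $\nc$, via (\ref{B.r(h)}). You instead verify existence directly through the operator identities $CB=BC+2\h$ and $B^{2}C=CB^{2}-4\h B+4C$ applied to the relations of Corollary \ref{uni of r(h)}, and you prove uniqueness by the leading-coefficient argument from (\ref{c2hnB}); both check out (in the $B^{2}$ computation the $\widehat g$-terms do cancel, where $\widetilde g(h)=\tfrac12\bigl(r(h-2)-r(h+2)\bigr)$ and $\widehat g(h)=\tfrac12\bigl(r(h-2)+r(h+2)\bigr)$). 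Your auxiliary identity $\h\,\widetilde g(h)=-2n\,\widehat g(h)$ is exactly the paper's relation (\ref{rh-2&rh+2}), equivalently (\ref{B.r(h)}), which the paper establishes only later in proving that $\nc$ is a submodule; proving it independently, as you do, or importing it is legitimate, since that derivation rests only on Corollary \ref{uni of r(h)}. You are also more careful than the paper about normalization: $C.r(h)\equiv\widetilde g(h)B$ has leading coefficient $-2n$, so $r^{*}(h)=-\tfrac1{2n}\widetilde g(h)$, whereas the paper's literal statement $r^{*}(h)=C.(r(h))$ is true only up to this scalar.

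One small gap: your derivation of $\h\,\widetilde g=-2n\,\widehat g$ by computing $C.\bigl(BC.r(h)\bigr)=C.\bigl(2n\h\,r(h)\bigr)$ in two ways gives $(1-n)\bigl(2\h\,\widetilde g(h)+4n\,\widehat g(h)\bigr)=0$, which is vacuous when $n=1$ (i.e.\ $\lambda=0$ or $\lambda=-2$). For that case you must either check the identity directly (there $r(h)=h$, $\widetilde g=-2$, $\widehat g=h$) or derive it as the paper does, by adding (\ref{rbyc2}) and (\ref{rbycb}) and cancelling the factor $h-2n$ in the domain $\cc[h]$, which is uniform in $n$. With that patch your argument is complete.
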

 \begin{cor}\label{pru corB}
 	Let $ \lambda\in 2\zzz $, $ r(h) $ as in Corollary \ref{uni of r(h)}, and let $ N $ be any non-zero submodule of $ \mc $. If $ g(h)B\in N $ with $ g(h)\neq \gamma C.r(h) $ for all $ \gamma \in \cc $, then 
 	\begin{enumerate}
 		\item If $ \deg(g(h))\leq n-1 $, then $ N=\mc $.
 		\item If $ \deg(g(h))> n-1 $, then either $ C.r(h)\in N $ or $ N=\mc $ $ \Box $
 	\end{enumerate} 
 \end{cor}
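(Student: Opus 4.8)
\emph{Proof strategy.} The plan is to re-run the degree-lowering descent from the proof of Lemma~\ref{BinN}; the only new feature is that now $\lambda\in2\zzz$, so $\mu=4(n^{2}-n)=4(n-1)n$, and the ``stuck'' step of that descent --- where the two auxiliary elements vanish simultaneously --- is no longer impossible but instead singles out the element $C.r(\h)$. Concretely, given a non-zero submodule $N$ of $\mc$ and a non-zero $g(\h)B\in N$ that is not a scalar multiple of $C.r(\h)$, put $m=\deg g$ and, for $m\ge1$, form exactly as in Lemma~\ref{BinN}
\[
u_{1}=C^{2}.(g(\h)B)+4(m+1)^{2}\,g(\h)B,\qquad u_{2}=CB.(g(\h)B)-2(m+2)\,\h\,g(\h)B .
\]
By \eqref{c2hnB} and \eqref{cbhnB} both $u_{1}$ and $u_{2}$ lie in $\cc[\h]B$; $u_{1}$ always has polynomial part of degree $<m$, and once $u_{1}=0$ the subleading coefficient of $g$ is forced to vanish, so that $u_{2}$ too drops below degree $m$. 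Hence each step of the descent inserts a non-zero element of $\cc[\h]B$ of strictly smaller degree into $N$, unless $u_{1}=u_{2}=0$.

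The crucial point is to pin down that stuck case. Using the coefficient computations \eqref{a2byc2B} and \eqref{a2bycbB} already made in the proof of Lemma~\ref{BinN}, $u_{1}=u_{2}=0$ forces $\mu=4m(m+1)$; together with $\mu=4(n-1)n$ this gives $m=n-1$, and then $g(\h)B$ satisfies the defining relations \eqref{rhactB} of $C.r(\h)$, so by the uniqueness in Corollary~\ref{uni of r(h)B} it is a scalar multiple of $C.r(\h)$, contrary to hypothesis. Thus the stuck case can occur only at degree exactly $n-1$, and only for a multiple of $C.r(\h)$; at degree $0$ the descent simply halts, with $B\in N$.

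For part (1), assume $\deg g\le n-1$. If $n=1$ the hypothesis is vacuous, since then $C.r(\h)$ is a non-zero multiple of $B$ and every non-zero $g(\h)B$ with $\deg g\le 0$ is a scalar multiple of it; so assume $n\ge2$. The starting element has degree $\le n-1$, and if its degree equals $n-1$ it is (by hypothesis) not a multiple of $C.r(\h)$, hence not stuck; so after at most one descent step the current element has degree $<n-1$, and from then on no element produced can be stuck (the stuck degree being $n-1$). Iterating, the degree drops to $0$, i.e. $B\in N$. Then $\h=\tfrac{1}{2}\,C.B\in N$ by \eqref{cg}, so $\h^{2}=\h.\h\in N$; from $B^{2}=B.B\in N$ and $B^{2}\equiv\mu-\h^{2}$ we get $\mu\in N$, and $\mu=4(n-1)n\ne0$ forces $1\in N$; hence $\h^{k}=\h^{k}.1\in N$ and $\h^{k}B=\h^{k}.B\in N$ for all $k$, i.e. $\cc[\h]\oplus\cc[\h]B\subseteq N$ and $N=\mc$.

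For part (2), $\deg g>n-1$ --- so $g(\h)B$ is automatically not a multiple of $C.r(\h)$, which has degree $n-1$ --- I would run the descent: whenever the current generator has degree $>n-1$ it cannot be stuck, so its degree strictly decreases, and the process halts either in the stuck configuration, giving a non-zero multiple of $C.r(\h)$ in $N$ and hence $C.r(\h)\in N$, or with a degree-$0$ element $B\in N$, in which case part~(1)'s argument yields $N=\mc$ for $n\ge2$ and directly $C.r(\h)\in N$ for $n=1$; so $C.r(\h)\in N$ or $N=\mc$ in all cases. The main obstacle is the bookkeeping in the stuck step --- confirming that $u_{1}=u_{2}=0$ really forces both $\deg g=n-1$ and $g(\h)B$ up to a scalar --- but this is not a new computation: it is exactly the content of \eqref{a2byc2B} and \eqref{a2bycbB} (which in the proof of Lemma~\ref{BinN} gave a contradiction only because there $\lambda\notin2\zzz$) together with Corollary~\ref{uni of r(h)B}, and the one genuine edge case, $m=0$ with $n=1$, has been flagged above.
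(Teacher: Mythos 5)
Your proposal is correct and follows exactly the route the paper intends: the corollary is left unproved there precisely because it is the $\lambda\in2\zzz$ rerun of the degree-lowering descent from Lemma~\ref{BinN}, with the stuck case $u_1=u_2=0$ forcing $\mu=4m(m+1)$, hence $m=n-1$, and then uniqueness (Corollary~\ref{uni of r(h)B} together with the remark that the first two relations imply the rest) identifying the element with a multiple of $C.r(h)$. Your handling of the terminal step $B\in N\Rightarrow N=\mc$ via $C.B\equiv 2h$ and $B^2\equiv\mu-h^2$, and of the $n=1$ (i.e.\ $\mu=0$) edge case, is also the right bookkeeping, so nothing is missing.
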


 \begin{cor}\label{uni of r(h)+Cr(h)}
	Let $ \lambda\in 2\zzz\ $, then  the elements $ u_{(\alpha_{1},\alpha_{2})}=\alpha_{1}r(h)+\alpha_{2}(C.r(h)) $, where $ \alpha_{1},\alpha_{2}\in \cc $,  are  the only elements in $  \mc $, such that 
	\begin{align}\label{rh+c.rh act}
	C^{2}.u_{(\alpha_{1},\alpha_{2})}&\equiv-4n^{2}u_{(\alpha_{1},\alpha_{2})}\nonumber\\ CB.u_{(\alpha_{1},\alpha_{2})}&\equiv2(n+1)\h u_{(\alpha_{1},\alpha_{2})}\nonumber\\BC.u_{(\alpha_{1},\alpha_{2})}&\equiv2n\h u_{(\alpha_{1},\alpha_{2})}\\B^{2}.u_{(\alpha_{1},\alpha_{2})}&\equiv(-\h^{2}-4n)u_{(\alpha_{1},\alpha_{2})}.
	\end{align} 
	where $ r(h) $ and $ n $ as in (\ref{uni of r(h)}).$ \     \qquad \qquad \qquad \qquad \qquad \qquad\Box\nonumber $
\end{cor}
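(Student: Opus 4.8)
The plan is to reduce the statement to the $\z_2$-grading of $\mc$ in which $(\mc)_0=\cc[h]$ and $(\mc)_1=\cc[h]B$ — the coarsening of the Pauli grading of $\ui$ recalled in Section \ref{ssGGS} — together with the two uniqueness assertions already at our disposal (Corollaries \ref{uni of r(h)} and \ref{uni of r(h)B}). The first point to record is that each of the operators $C^2$, $CB$, $BC=CB-2h$ and $B^2$ is homogeneous of degree $0$ for this $\z_2$-grading, hence carries $\cc[h]$ into $\cc[h]$ and $\cc[h]B$ into $\cc[h]B$; this is also visible directly from the formulas \eqref{c2f}, \eqref{cbf}, \eqref{c2hnB} and \eqref{cbhnB}.

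For the direction that every $u_{(\alpha_1,\alpha_2)}$ satisfies \eqref{rh+c.rh act}: Corollary \ref{uni of r(h)} states that $r(h)$ satisfies \eqref{rhact}, which is literally \eqref{rh+c.rh act} for $u=r(h)$, while Corollary \ref{uni of r(h)B} states that $C.r(h)$ satisfies \eqref{rhactB} with $l=n-1$. Specializing \eqref{rhactB} at $l=n-1$ turns its four coefficients $-4(l+1)^2,\ 2(l+2)h,\ 2(l+1)h,\ -h^2-4(l+1)$ into $-4n^2,\ 2(n+1)h,\ 2nh,\ -h^2-4n$, i.e. exactly those of \eqref{rh+c.rh act}. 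Since each of the four conditions in \eqref{rh+c.rh act} is $\cc$-linear in its argument, every linear combination $u_{(\alpha_1,\alpha_2)}=\alpha_1 r(h)+\alpha_2(C.r(h))$ again satisfies \eqref{rh+c.rh act}.

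For the converse, suppose $u=f(h)+g(h)B\in\mc$ satisfies \eqref{rh+c.rh act}. Applying $C^2$, $CB$, $BC$, $B^2$ to $u$ and comparing degree-$0$ and degree-$1$ components (legitimate by the first paragraph) forces $f(h)$ to satisfy \eqref{rhact} and $g(h)B$ to satisfy \eqref{rhactB} with $l=n-1$. If $f\neq0$, comparing leading terms in $C^2.f\equiv-4n^2f$ and using the top coefficient $a_n=-4n^2$ in \eqref{c2hn} forces $\deg f=n$ (here we use $n\geq1$ for $\lambda\in2\zzz$, as recorded after Corollary \ref{pru cor}); hence a scalar multiple of $f$ is a monic degree-$n$ solution of \eqref{rhact}, which by the uniqueness in Corollary \ref{uni of r(h)} equals $r(h)$, so $f\in\cc r(h)$. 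Similarly, if $g\neq0$, the top coefficient $a_l=-4(l+1)^2$ in \eqref{c2hnB} forces $\deg g=n-1$, and the uniqueness in Corollary \ref{uni of r(h)B} forces $g$ to be a scalar multiple of $r^*(h)$, so $g(h)B\in\cc(C.r(h))$. Combining, $u=u_{(\alpha_1,\alpha_2)}$ for suitable $\alpha_1,\alpha_2\in\cc$.

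I do not expect a genuine obstacle: the whole assertion is essentially a bookkeeping consequence of Corollaries \ref{uni of r(h)} and \ref{uni of r(h)B}. The one place needing care is the index shift in the first direction — checking that \eqref{rhact} and the $l=n-1$ case of \eqref{rhactB} carry identical ``eigenvalue'' data, which is precisely the coincidence placing $r(h)$ and $C.r(h)$ in a single $2$-dimensional solution space — together with the routine leading-coefficient reading from \eqref{c2hn} and \eqref{c2hnB} that pins down the degrees of $f$ and $g$.
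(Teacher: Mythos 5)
Your proof is correct and takes essentially the route the paper intends: the corollary is stated there without an explicit proof, as an immediate consequence of Corollaries \ref{uni of r(h)} and \ref{uni of r(h)B} (together with the remark preceding Corollary \ref{uni of r(h)B}), and your argument — splitting $u=f(h)+g(h)B$ along the $\ZZ_2$-coarsening preserved by $C^{2}$, $CB$, $BC$, $B^{2}$, pinning $\deg f=n$ and $\deg g=n-1$ via the leading coefficients $-4m^{2}$ and $-4(m+1)^{2}$ from \eqref{c2hn} and \eqref{c2hnB}, and then invoking the two uniqueness statements — is precisely that bookkeeping made explicit. No gaps; the index check $l=n-1$ matching the eigenvalue data is handled correctly.
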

 
 Now we are ready to prove Theorem \ref{mc simple}.

\begin{proof}[Proof of Theorem \ref{mc simple}] 

	Assume that $ N $ be a nonzero submodule of $ \mc $. Choose a non-zero element $ v\in N $
	
		\textit{Case 1}:
If $ 0\neq v=f({h}) \in  N $ then by Lemma \ref{1inN}, $ 1\in  N $, hence $  N=\mc. $

 \textit{Case 2}: For the general case, let us assume that $ v=f({h})+g({h})B $ with $ g({h})\neq  0$. Using Case 1, we may assume that $ v=\gamma+g({h})B $, where $ \gamma \in \cc$ and $ g({h})\neq  0$.
 
We have 
\[
B.v=\gamma B+q({h})\] where

 \begin{align}\label{B.g(h)B}
q(h)=B.g({h})B\equiv\frac{1}{2}(g({h}-2)(\mu-{h}^2+2{h})+g({h}+2)(\mu-{h}^2-2{h})),
\end{align}
and $ \deg(q({h}))\geq 2 $. If $ \gamma=0 $ then $ 0\neq B.v=q(h) \in N$, which return us to the first case, and if $ 0\neq \gamma $, then

  \begin{align}\label{gam neq 0}
0\neq w({h})=\frac{1}{\gamma}g(h)B.v-v=\frac{1}{\gamma}g({h})q({h})-\gamma \in N,
\end{align}
which also brings us back to the first case. As a result, $\mc$ is a simple module. 
\end{proof}

	\begin{cor}
		Let $ \lambda\in \cc\setminus 2\zzz $, then $ \mc $ is simple $ \zzz_{2}^{2} $-graded $ \s $-module of rank $ 2 $. $ \qquad\Box $
	\end{cor}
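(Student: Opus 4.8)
The plan is to see that this Corollary follows formally from Theorem~\ref{mc simple} once we record that $\mc$ is $\zzz_2^2$-graded and of rank $2$; the real content has already been proved, so only a bookkeeping argument remains.

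First I would make the grading explicit. By Proposition~\ref{I is graded} the ideal $I_\lambda$ is $\zzz_2^2$-graded, so $\ui=U(\s)/I_\lambda$ carries the Pauli grading in which $h=A$ is homogeneous of degree $(1,0)$, $B$ of degree $(0,1)$ and $C$ of degree $(1,1)$, with the $\zzz_2^2$-homogeneous basis $\wh B$ exhibited above. Since $C$ is $\zzz_2^2$-homogeneous, the left ideal $\ui C$ is a graded left ideal, hence $\mc=\ui/\ui C$ is a $\zzz_2^2$-graded $\ui$-module, equivalently a $\zzz_2^2$-graded $\s$-module. Under the identification $\mc=\cc[h]\oplus\cc[h]B$ the homogeneous components are
\[
(\mc)_{(0,0)}=\cc[h^2],\qquad (\mc)_{(1,0)}=h\cc[h^2],\qquad (\mc)_{(0,1)}=\cc[h^2]B,\qquad (\mc)_{(1,1)}=h\cc[h^2]B,
\]
and one checks straight from relations $(\ref{cf})$, $(\ref{bf})$, $(\ref{xb})$, $(\ref{yb})$ that $h$, $B$, $C$ act on $\mc$ by shifting degrees by $(1,0)$, $(0,1)$, $(1,1)$ respectively, so this decomposition is indeed a grading of $\mc$ as an $\s$-module.

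Next I would deduce graded-simplicity and the rank. Because $\lambda\notin 2\zzz$, Theorem~\ref{mc simple} gives that $\mc$ is simple as an ordinary $\s$-module; since every $\zzz_2^2$-graded submodule is in particular an $\s$-submodule, it must be $\{0\}$ or $\mc$, so $\mc$ is graded-simple. Finally, the discussion preceding Theorem~\ref{mc simple} shows that every element of $\mc$ is uniquely a $\cc[h]$-linear combination of $1$ and $B$, i.e.\ $\mc$ is free of rank $2$ over $\cc[h]$, which is exactly the statement that $\mc$ is a torsion-free $\s$-module of rank $2$.

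There is no real obstacle here: the only item needing (routine) verification is that the displayed homogeneous decomposition is $\s$-stable in the graded sense, and that is immediate from the displayed action formulas. Everything else is a formal assembly of facts already established, the substance being Theorem~\ref{mc simple}.
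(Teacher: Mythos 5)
Your proposal is correct and follows essentially the same route as the paper, which treats the corollary as immediate: the paper has already noted that $\mc$ is $\zzz_2^2$-graded (because $C$ is Pauli-homogeneous, so $\ui C$ is a graded left ideal) and that $\mc=\cc[h]\oplus\cc[h]B$ is free of rank $2$ over $\cc[h]$, and then graded-simplicity follows at once from Theorem~\ref{mc simple} since any graded submodule is in particular a submodule. Your only addition is writing out the four homogeneous components explicitly, which is a harmless (and correct) elaboration.
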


 Now assume $ \lambda\in2\zzz$, and consider the subspace

 \begin{align*}
\nc&= \cc[h]r(\h)\oplus\cc[h](C.r(\h))\\&= \cc[h]r(\h)\oplus\cc[h](r(h-2)-r(h+2))B.
\end{align*}
\begin{lem}
	Let $ \lambda\in 2\zzz $, then $\nc $ is a submodule of the module $ \mc $.
\end{lem}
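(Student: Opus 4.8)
The plan is to verify directly that the $\cc$-subspace $\nc$ is stable under $\s$, hence a submodule of $\mc$. Put $P=\cc[h]\,r(h)$ and $Q=\cc[h]\,(C.r(h))$. Since $r(h)$ lies in the summand $\cc[h]$ of $\mc=\cc[h]\oplus\cc[h]B$ while $C.r(h)\equiv\tfrac12\big(r(h-2)-r(h+2)\big)B$ lies in $\cc[h]B$ by $(\ref{cf})$, the sum $\nc=P\oplus Q$ is direct, and $\nc$ is a $\cc[h]$-submodule of $\mc$, hence closed under the action of $h$. As $\s=\langle h,x,y\rangle$ generates $U(\s)$, it is enough to show that $x$ and $y$ carry the two $\cc[h]$-generators $r(h)$ and $C.r(h)$ of $\nc$ back into $\nc$: from the commutation rules $xf(h)=f(h-2)x$ and $yf(h)=f(h+2)y$ one gets $x.(f(h).m)=f(h-2).(x.m)$ and $y.(f(h).m)=f(h+2).(y.m)$ for $f\in\cc[h]$, so the $\cc[h]$-stability of $\nc$ propagates invariance under $x,y$ from the generators to all of $\nc$, and iterating along monomials in $h,x,y$ then gives $U(\s).\nc\subseteq\nc$.

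By $(\ref{cf})$, $(\ref{bf})$, $(\ref{xb})$, $(\ref{yb})$ the four elements to be checked work out to
\begin{align*}
x.r(h)&\equiv\tfrac12\,r(h-2)B,\qquad y.r(h)\equiv\tfrac12\,r(h+2)B,\\
x.\big(C.r(h)\big)&\equiv\tfrac14\big(r(h-4)-r(h)\big)\big(\mu-h^2+2h\big),\\
y.\big(C.r(h)\big)&\equiv\tfrac14\big(r(h)-r(h+4)\big)\big(\mu-h^2-2h\big).
\end{align*}
Since $\big(r(h-2)-r(h+2)\big)B\equiv 2\,(C.r(h))$ in $\mc$ and $r(h)\mid r(h)(\mu-h^2\pm2h)$ trivially, the lemma reduces to two polynomial divisibilities for $r(h)$:
\textbf{(I)} $r(h-2)-r(h+2)$ divides both $r(h-2)$ and $r(h+2)$; and
\textbf{(II)} $r(h)$ divides $r(h-4)(\mu-h^2+2h)$ and $r(h+4)(\mu-h^2-2h)$.

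Both divisibilities come from the eigenvalue relations of Corollaries $\ref{uni of r(h)}$ and $\ref{uni of r(h)B}$, together with $\mu=4(n^2-n)$ (valid since $\lambda\in2\zzz$). For \textbf{(II)}: evaluating $x^2.r(h)$ directly gives $\tfrac14\,r(h-4)(\mu-h^2+2h)$, while $x^2=\tfrac14(B^2+BC+CB+C^2)$ and Corollary $\ref{uni of r(h)}$ give $\tfrac14\big(-h^2+(4n+2)h-\mu-8n\big)r(h)$; comparing, $r(h-4)(\mu-h^2+2h)=\big(-h^2+(4n+2)h-\mu-8n\big)r(h)$, and the mirror computation with $y^2$ gives $r(h+4)(\mu-h^2-2h)=\big(-h^2-(4n+2)h-\mu-8n\big)r(h)$. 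For \textbf{(I)}: by associativity $BC.(C.r(h))=B.\big(C^2.r(h)\big)=-4n^2\,B.r(h)$ by Corollary $\ref{uni of r(h)}$, whereas Corollary $\ref{uni of r(h)B}$ gives $BC.(C.r(h))=2nh\,(C.r(h))$; equating coefficients of $B$ and cancelling $n\neq0$ yields the ladder identity $(h+2n)r(h-2)=(h-2n)r(h+2)$, hence $4n\,r(h-2)=-(h-2n)\big(r(h-2)-r(h+2)\big)$ and $4n\,r(h+2)=-(h+2n)\big(r(h-2)-r(h+2)\big)$. Substituting these identities back into the displayed expressions writes $x.r(h),y.r(h)$ explicitly as $\cc[h]$-multiples of $C.r(h)$ and $x.(C.r(h)),y.(C.r(h))$ explicitly as $\cc[h]$-multiples of $r(h)$, so $\nc$ is a submodule.

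The only non-routine step is \textbf{(I)}--\textbf{(II)}: there is no closed formula for $r(h)$ in play, and the argument instead rests on the observation that the relations satisfied by $r(h)$ (Corollary $\ref{uni of r(h)}$) and by $C.r(h)$ (Corollary $\ref{uni of r(h)B}$) are over-determined, this redundancy being exactly what forces the two factorizations --- for instance \textbf{(I)} is precisely the compatibility of $C^2.r(h)\equiv-4n^2r(h)$ with $BC.(C.r(h))\equiv 2nh\,(C.r(h))$. One also uses throughout that $\lambda\in2\zzz$ forces $n\geq1$, so that $r(h)$ is non-constant, the divisions by $4n$ are legitimate, and $\mu=4(n^2-n)$.
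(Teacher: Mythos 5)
Your strategy is essentially parallel to the paper's: everything hinges on the ladder identity $(h+2n)r(h-2)=(h-2n)r(h+2)$ (the paper's (\ref{rh-2&rh+2})), which makes $r(h-2)$ and $r(h+2)$ explicit $\cc[h]$-multiples of $r(h-2)-r(h+2)$, i.e.\ of $C.r(h)$. Your reduction to the two $\cc[h]$-generators via $x,y$ (using $x.(f(h).m)=f(h-2).(x.m)$ and the $\cc[h]$-stability of $\nc$) is correct and in fact a little cleaner than the paper's computation through $Bf(h)=f'(h)B+f''(h)C$. Step (II) is also fine: the relations $BC.r(h)\equiv 2nh\,r(h)$ and $B^2.r(h)\equiv(-h^2-4n)r(h)$ that you use are harmless, since they follow from the first two relations of Corollary \ref{uni of r(h)} via $BC=CB-2h$ and $B^2-C^2=\mu-h^2$ in $\ui$, and those first two are what the analysis in Lemma \ref{1inN} genuinely establishes; indeed (II) could be shortened by writing $x.(C.r(h))=\tfrac12(BC+C^2).r(h)=(nh-2n^2)r(h)$ and similarly for $y$, avoiding the $x^2,y^2$ detour.

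The genuine problem is your derivation of (I). There you invoke $BC.(C.r(h))\equiv 2nh\,(C.r(h))$, i.e.\ the relations (\ref{rhactB}) of Corollary \ref{uni of r(h)B}) applied to $C.r(h)$. But the paper's only support for the claim that $C.r(h)$ satisfies (\ref{rhactB}) is the unproved remark ``Note that the element $C.r(h)$ satisfies the above conditions,'' and that claim is logically equivalent to what the lemma must prove: since $BC.(C.r(h))=B.(C^2.r(h))=-4n^2\,B.r(h)$, the identity you quote says precisely $B.r(h)\equiv-\tfrac{1}{2n}h\,(C.r(h))$, which is exactly the membership $B.r(h)\in\nc$ (the paper's (\ref{B.r(h)})) that constitutes the heart of the proof --- and the paper's remark following the lemma derives the last two relations of (\ref{rhact}) and (\ref{rhactB}) \emph{from} (\ref{B.r(h)}), not the other way around. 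So, audited for dependencies, your proof of the ladder identity is circular, or at best rests on an assertion whose only available justification is the lemma itself. The repair is straightforward and is what the paper does: derive (\ref{rh-2&rh+2}) from the two honestly established eigen-relations $C^2.r(h)\equiv-4n^2r(h)$ and $CB.r(h)\equiv2(n+1)h\,r(h)$, expanded through (\ref{c2f}) and (\ref{cbf}) and combined using $\mu=4(n^2-n)$, then cancel the common factor $h-2n$ and shift $h\mapsto h+2$. With (I) obtained this way, the rest of your argument goes through verbatim.
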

\begin{proof}
	Let $ u=f(h)r(h)+g(h)(C.r(h)) $, it is clear the action of $ h $ leaves $\nc$ invariant. Now
	
 \begin{align*}
	B.u&=Bf(h)r(h)+Bg(h)(C.r(h))\\
	&\equiv(f^{\prime}(h)B+f^{\prime\prime}(h)C)r(h)+(g^{\prime}(h)B+g^{\prime\prime}(h)C)(C.r(h))\\
	&\equiv f^{\prime}(h)B(r(h))+f^{\prime\prime}(h)C(r(h))+g^{\prime}(h)B(C.r(h))\\
	&+g^{\prime\prime}(h)C(C.r(h))\\
	&\equiv f^{\prime}(h)B(r(h))+f^{\prime\prime}(h)C(r(h))+g^{\prime}(h)BC.r(h)+g^{\prime\prime}(h)C^{2}r(h)\\ 
	&\overset{\text{ by }(\ref{rhact})}{\equiv\joinrel\equiv}f^{\prime}(h)\big(B.r(h)\big)+f^{\prime\prime}(h)\big(C.r(h)\big)\\
	&+\big(2(n+1)hg^{\prime}(h)-4n^{2}g^{\prime\prime}(h)\big)r(h),
\end{align*}
	where
\begin{equation}\label{f prime and double prime}
	 \begin{aligned}
	f^{\prime}(h)&=\frac{1}{2}(f(h-2)+f(h+2))\\
	f^{\prime\prime}(h)&=\frac{1}{2}(f(h-2)-f(h+2)),
	\end{aligned}
\end{equation} and similarly for $ g^{\prime}(h) $ and $ g^{\prime\prime}(h) $.

	Note that the second and third terms are in $\nc$, and so it is enough to show that $ B.r(h)\equiv\frac{1}{2}\big(r(h-2)+r(h+2)\big)B $ is in $\nc$.
	
Using (\ref{c2f}), (\ref{rhact}), and $ \mu=4(n^{2}-n) $, we find that 
\begin{align*}
	&C^{2}.r({h})\equiv -\frac{1}{4}(({h}^{2}-2{h}-\mu)r({h}-4)\\
	&+2(\mu-{h}^{2})r({h})+({h}^{2}+2{h}-\mu)r({h}+4))\\&\equiv-4n^{2}r(\h).
	\end{align*} 
 It follows that
\begin{align}\label{rbyc2}
	&({h}^{2}-2{h}-\mu)r({h}-4)+({h}^{2}+2{h}-\mu)f({h}+4)\\&\equiv(8n^{2}+8n+2h^{2})r(h).
	\end{align} 
Similarly, using (\ref{cbf}), (\ref{rhact}), and $ \mu=4(n^{2}-n) $, we get 
\begin{align*}
	&CB.r({h})\equiv \frac{1}{4}((-{h}^{2}+2{h}+\mu)r({h}-4)\\&+4{h}r({h})+({h}^{2}+2{h}-\mu)r({h}+4))\\&\equiv 2(n+1)\h r(\h).
	\end{align*}
	
Thus
	\begin{align}\label{rbycb}
	&({h}^{2}-2{h}-\mu)r({h}-4)-4{h}f({h})-({h}^{2}+2{h}-\mu)r({h}+4))\\ \nonumber
	&=(-8nh-4h)r(h).
	\end{align}
Adding (\ref{rbyc2}) and (\ref{rbycb}), and canceling out $ 2 $, we get 
\begin{align*}
	({h}^{2}-2{h}-4(n^2-n))r({h}-4)=(h^2-(4n+2)h+(4n^2+4n))r(h)
	\end{align*} 
	or 
	\begin{align*}(h-2n)(h+(2n-2))r(h-4)=(h-2n)(h-(2n+2))r(h),
	\end{align*} 
	which implies that
	 \begin{align}\label{rh&rh-4}
	(h+(2n-2))r(h-4)=(h-(2n+2))r(h).
	\end{align}
If we replace $ h $ by $ h+2 $ in (\ref{rh&rh-4}) we get the relation 
\begin{align}\label{rh-2&rh+2} (h+2n)r(h-2)=(h-2n)r(h+2)
	\end{align} 
Using (\ref{rh-2&rh+2}), we obtain 
\begin{align}\label{rh-2}
	&-\frac{1}{4n}(h-2n)\big(r(h-2)-r(h+2)\big)\nonumber\\&=-\frac{1}{4n}((h-2n)r(h-2)-(h-2n)r(h+2))\nonumber\\&\overset{by (\ref{rh-2&rh+2})}{{=\joinrel=\joinrel=}} -\frac{1}{4n}((h-2n)r(h-2)-(h+2n)r(h-2))\nonumber\\&=r(h-2).
\end{align}
 
Similarly, 
\begin{align}\label{rh+2}
	-\frac{1}{4n}(h+2n)&(r(h-2)-r(h+2))\nonumber\\&=-\frac{1}{4n}((h+2n)r(h-2)-(h+2n)r(h+2))\nonumber\\&\overset{by (\ref{rh-2&rh+2})}{{=\joinrel=\joinrel=}} -\frac{1}{4n}((h-2n)r(h+2)-(h+2n)r(h+2))\nonumber\\&=r(h+2). 
\end{align}

As a result,
	\begin{align}\label{B.r(h)}
B.r(h)=&\frac{1}{2}\big(r(h-2)+r(h+2)\big)B\\&\equiv   \frac{1}{2}\bigg(\frac{(-1)}{4n}(h-2n)\big(r(h-2)-r(h+2)\big)B\nonumber\\&+\frac{(-1)}{4n}(h+2n)(r(h-2)-r(h+2))B\bigg)\nonumber\\\overset{\text{by }(\ref{cf})}{\equiv\joinrel\equiv\joinrel\equiv}&\   \frac{1}{2}\bigg(\frac{(-1)}{4n}(h-2n)(2C.r(h))+\frac{(-1)}{4n}(h+2n)(2C.r(h))\bigg)\nonumber\\\equiv&\frac{(-1)}{2n}h(C.r(h)) \in \nc\nonumber.
\end{align}
Hence $ \nc $ is a submodule of $ \mc $.
\end{proof}

\begin{rem}
	
The relation (\ref{B.r(h)}) leads to the following result. If the first two relations in the Corollaries (\ref{uni of r(h)}), (\ref{uni of r(h)B}), and (\ref{uni of r(h)+Cr(h)}) hold then the last two also hold.  
\end{rem}

Now we are ready to start proving Theorem \ref{mc not simple}.
\begin{proof}
	  Let $ W $ be a non-zero submodule of $ \mc $. Choose a non-zero element $ u=f(h)+g(h)B\in W $. We can apply the actions by $ C^{2} $ or $ CB $ and reduce the degree of the polynomial $ f(h) $ until we get either a constant or $ r(h) $. Hence we can reduce the cases of the element $ u $, up to the scalar multiplication, to the following cases: either $ u=f(h) $, $ u=g(h)B $, $ u=1+g(h)B $, or $ u=r(h)+g(h)B $, where $ g(h)\neq0 $ in Cases 2,3, and 4.
	  
 Case 1: If $ 0\neq u=f(h) \in W$, then by Corollary \ref{pru cor}, either $ W=\mc $ or $ W=\nc $.
 
Case 2: Now let $ u=g(h)B $ where $ g(h)\neq0 $, then $ B.u $ is a non-zero element in $ \cc[h] $ in $ W $, see (\ref{B.g(h)B}), which returns us to the Case 1.

Case 3: If $ u=1+g(h)B $ with $ g(h)\neq0 $, then $ g(h)B.u-u $ is a non-zero element in $ \cc[h] $ in $ W $, see (\ref{gam neq 0}), which also means that either $ W=\mc $ or $ W=\nc $.

Case 4: If $ u= r(h)+g(h)B $ and $ g(h)\neq0 $, then 
\[
C^{2}.u-(-4n^{2}u)=g^{*}(h)B\in W,
\]
for some polynomial $ g^{*}(h) $. If $ g^{*}(h)\neq 0 $, then by Case 2, either $ W=\mc $ or $ W=\nc $. Otherwise, $ C^{2}u\equiv-4n^{2}u $, in this case we can try again with the element
\[ 
CB.u-2(n+1)h.u=g^{**}(h)B\in W,
\] 
for some polynomial $ g^{**}(h) $. If $ g^{**}(h)\neq 0 $, then by Case 2, either $ W=\mc $ or $ W=\nc $. Otherwise, $ CB.u\equiv2(n+1)hu $, which means by Corollary (\ref{uni of r(h)+Cr(h)}), that $u=r(h)+\alpha(C(r(h))) $ where $ \alpha\in \cc^{*} $. In this case, $ \alpha C.u=\alpha C(r(h))-4n^{2}\alpha^{2} r(h) \in W $, which implies that $  \alpha C.u+u=(4n^{2}\alpha^{2}+1)r(h)\in W $. If $ (4n^{2}\alpha^{2}+1) \neq 0$, then $ r(h)\in W $, which implies that $ W=\nc $. Hence, the only case remaining is  $ u=r(h)+\alpha(C(r(h))) $ and $ (4n^{2}\alpha^{2}+1) = 0$, in other words, when  $ u=r(h)\pm\frac{i}{2n}(C.r(h)) $.

	\begin{lem}
	Let $ \lambda\in 2\zzz $, and $ r(h) $ as in Corollary \ref{rhact}, then $ \wc=\cc[h](r(h)+\frac{i}{2n}(C.r(h))) $ and $\wcc=\cc[h](r(h)-\frac{i}{2n}(C.r(h)))  $ are submodules of $ \mc $, which are torsion-free modules of rank one.
\end{lem}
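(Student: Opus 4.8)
The plan is to realize $\wc$ and $\wcc$ as cyclic free $\cc[h]$-modules on which the generators $\h,B,C$ of $U(\s)$ act by explicit formulas. Write $w=r(\h)$ and $z=C.r(\h)$ (recall $n\ge 1$, so $\tfrac{i}{2n}$ is defined, $i=\sqrt{-1}$), so that $p:=w+\tfrac{i}{2n}z$ generates $\wc$ over $\cc[h]$ and $q:=w-\tfrac{i}{2n}z$ generates $\wcc$. First I would record, for any $f\in\cc[h]$, the identities in $\ui$
\[
Bf(\h)=f^{\prime}(\h)\,B+f^{\prime\prime}(\h)\,C,\qquad Cf(\h)=f^{\prime\prime}(\h)\,B+f^{\prime}(\h)\,C,
\]
with $f^{\prime},f^{\prime\prime}$ as in (\ref{f prime and double prime}); these follow at once from $x=\tfrac12(B+C)$, $y=\tfrac12(B-C)$ and $xf(\h)=f(\h-2)x$, $yf(\h)=f(\h+2)y$. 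Since $\{\h,B,C\}$ is a basis of $\s$ and hence generates $U(\s)$, a $\cc$-subspace of $\mc$ stable under $\h$, $B$ and $C$ is automatically a submodule, so it is enough to verify this stability for $\wc$ and $\wcc$.

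The second step computes the action of $B$ and $C$ on the cyclic generators. By definition $C.w=z$; relation (\ref{B.r(h)}) gives $B.w\equiv-\tfrac{\h}{2n}z$; and the first and third relations of (\ref{rhact}) (from Corollary~\ref{uni of r(h)}) give $C.z=C^{2}.r(\h)\equiv-4n^{2}w$ and $B.z=BC.r(\h)\equiv2n\h\,w$. Substituting, one obtains
\[
B.p\equiv\tfrac{i}{2n}(2n\h\,w)-\tfrac{\h}{2n}z=i\h\,w-\tfrac{\h}{2n}z=i\h\,p,\qquad C.p\equiv z+\tfrac{i}{2n}(-4n^{2}w)=z-2ni\,w=-2ni\,p,
\]
the two collapses being precisely the identity $\tfrac1i=-i$; the same computation with $i$ replaced by $-i$ yields $B.q\equiv-i\h\,q$ and $C.q\equiv2ni\,q$. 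Consequently, for every $\phi\in\cc[h]$,
\[
B.(\phi(\h)p)=(\phi^{\prime}(\h)B+\phi^{\prime\prime}(\h)C).p\equiv\big(i\h\,\phi^{\prime}(\h)-2ni\,\phi^{\prime\prime}(\h)\big)p,
\]
\[
C.(\phi(\h)p)=(\phi^{\prime\prime}(\h)B+\phi^{\prime}(\h)C).p\equiv\big(i\h\,\phi^{\prime\prime}(\h)-2ni\,\phi^{\prime}(\h)\big)p,
\]
together with $\h.(\phi(\h)p)=\h\phi(\h)p$, and all three lie in $\wc$; hence $\wc$ is a submodule, and likewise $\wcc$ via the analogous identities for $q$.

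The last step is the rank statement. The module $\mc=\cc[h]\oplus\cc[h]B$ is free of rank $2$ over $\cc[h]$, hence torsion-free, so its submodules $\wc$ and $\wcc$ are torsion-free as well. The $\cc[h]\!\cdot\!1$-component of $p$ equals $r(\h)\ne 0$, so $\phi(\h)p=0$ forces $\phi(\h)r(\h)=0$ and therefore $\phi=0$; thus $\phi\mapsto\phi(\h)p$ is an isomorphism of $\cc[h]$-modules $\cc[h]\to\wc$, i.e.\ $\wc$ is free of rank $1$ over $\cc[h]$, which is exactly to say that $\wc$ is a torsion-free $\s$-module of rank $1$; the same argument applies to $\wcc$ via $q$. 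There is no genuine obstacle in this argument: the substantive work has already been done in proving Corollary~\ref{uni of r(h)} and the relation (\ref{B.r(h)}), and the only point requiring a moment's care is the elementary bookkeeping with $i=\sqrt{-1}$ that converts those relations into the clean eigenvalue-type identities $B.p\equiv i\h\,p$ and $C.p\equiv-2ni\,p$ (and their analogues for $q$).
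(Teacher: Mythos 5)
Your proof is correct and follows essentially the same route as the paper: compute the action of $B$ and $C$ on the cyclic generator $r(h)\pm\frac{i}{2n}(C.r(h))$ using Corollary~\ref{uni of r(h)} and relation (\ref{B.r(h)}), then handle a general element $\phi(h)\cdot(\text{generator})$ via the identities $B\phi(\h)=\phi'(\h)B+\phi''(\h)C$ and $C\phi(\h)=\phi''(\h)B+\phi'(\h)C$. Two minor remarks: your eigenvalue $C.p\equiv-2ni\,p$ is the correct sign (the paper's displayed $2ni$ is a harmless slip), and your explicit verification that $\wc$, $\wcc$ are free of rank one over $\cc[h]$ supplies a detail the paper leaves implicit.
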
 
\begin{proof}
	Using Corollary (\ref{uni of r(h)}) and (\ref{B.r(h)}) we have,
	\begin{align*}
	B.\big(r(h)+&\frac{i}{2n}(C.r(h))\big)= B.r(h)+\frac{i}{2n}(BC.r(h))\\\equiv&\frac{(-1)}{2n}hC.r(h)+ihr(h)\\\equiv&ih\big(\frac{(-1)}{2ni}C.r(h)+r(h)\big)\\=&ih\big(r(h)+\frac{i}{2n}C.r(h)\big)
	\end{align*}
	and 
	\begin{align*}
	C.(r(h)+\frac{i}{2n}(C.r(h)))=&C.r(h)+\frac{i}{2n}(C^{2}r(h))\\\equiv&C.r(h)+\frac{i}{2n}(-4n^{2}r(h))\\\equiv&(C.r(h)-i(2nr(h)))\\\equiv&2ni\big(r(h)+\frac{i}{2n}C.r(h)\big)
	\end{align*}
	
	For arbitrary elements, let $ u=f(h)(r(h)+\frac{i}{2n}(C.r(h))) $, it is clear that the action by $ h  $ is invariant. Now \begin{align*}
	B.u=&B\big(f(h)\big(r(h)+\frac{i}{2n}(C.r(h))\big) \big)\\\equiv& \big(f^{\prime}(h)B+f^{\prime\prime}(h)C\big)\big(r(h)+\frac{i}{2n}(C.r(h))\big)\\\equiv&ihf^{\prime}(h)(r(h)+\frac{i}{2n}C.r(h))-2nif^{\prime\prime}(h)(r(h)+\frac{i}{2n}C.r(h))\\\equiv&\big(ihf^{\prime}(h)-2nif^{\prime\prime}(h)\big)\big(r(h)+\frac{i}{2n}C.r(h)\big)
	\end{align*}
	where $ f^{\prime}(h) $ and $ f^{\prime\prime } (h)$ as in \ref{f prime and double prime}. Also 
	\begin{align*}
	C.u=&C\big(f(h)(r(h)+\frac{i}{2n}(C.r(h))) \big)\\\equiv& \big(f^{\prime\prime}(h)B+f^{\prime}(h)C\big)\big(r(h)+\frac{i}{2n}(C.r(h))\big)\\\equiv&ihf^{\prime\prime}(h)(r(h)+\frac{i}{2n}C.r(h))-2nif^{\prime}(h)(r(h)+\frac{i}{2n}C.r(h))\\\equiv&\big(ihf^{\prime\prime}(h)-2nif^{\prime}(h)\big)\big(r(h)+\frac{i}{2n}C.r(h)\big),
	\end{align*} 
	which belongs to $ \wc $. Hence $ \wc $ is a submodule of $ \nc $. A similar calculation shows that $ \wcc $ is also another submodule of $ \nc $.
	
\end{proof}

Hence $ \mc $ has exactly $ 3 $ proper submodules, $ \nc,\wc, \text{ and }\wcc $. Moreover,  $\nc $ is the unique maximal submodule of the module $ \mc $. It is clear that $ \nc=\wc + \wcc $; suppose that $ 0\neq v\in \wc\cap\wcc $. Then $ v  $ can be written as $ v=f(h)(r(h)+\frac{i}{2n}(C.r(h))) $ for some $ 0\neq f(h)\in \cc[h] $, and $ V=g(h)(r(h)-\frac{i}{2n}(C.r(h))) $. It follows that $ f(h)g(h)r(h) \in \wc\cap \wcc$, which means that $ \wc \cap \wcc$ is either $ \mc $ or $ \nc $. This is a contradiction and so $ \nc=\wc \oplus \wcc $.

\end{proof}

\begin{cor}
		Let $ \lambda\in 2\zzz  $, then $ \nc $ is a $ \zzz_{2}^{2} $-graded simple $ \s $-module.
\end{cor}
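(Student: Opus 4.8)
The plan is to read off the answer from the submodule classification already obtained in the proof of Theorem \ref{mc not simple}: the proper nonzero submodules of $\mc$ are exactly $\wc$, $\wcc$ and $\nc$, with $\nc$ the unique maximal one and $\nc=\wc\oplus\wcc$. Since $\nc$ is already known to be a graded submodule of $\mc$ (Theorem \ref{mc not simple}; alternatively, the unique maximal submodule is stable under the diagonalizable group of grading automorphisms of $\mc$, hence graded), it carries the induced Pauli $\zzz_2^2$-grading, and graded-simplicity means it has no proper nonzero \emph{graded} submodule. So I would let $V$ be a nonzero graded submodule of $\nc$; then $V$ is a nonzero submodule of $\mc$ contained in $\nc\subsetneq\mc$, hence $V\in\{\wc,\wcc,\nc\}$, and the whole proof reduces to showing that neither $\wc$ nor $\wcc$ is a graded subspace of $\mc$.

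For that step I would write the Pauli grading on $\mc=\cc[h]\oplus\cc[h]B$ explicitly: since $h=A$ is homogeneous of degree $(1,0)$ and $B$ of degree $(0,1)$, the homogeneous components are $(\mc)_{(0,0)}=\cc[h^2]$, $(\mc)_{(1,0)}=h\,\cc[h^2]$, $(\mc)_{(0,1)}=\cc[h^2]B$ and $(\mc)_{(1,1)}=h\,\cc[h^2]B$. In particular $\cc[h]$ and $\cc[h]B$ are each a sum of homogeneous components, so every homogeneous vector of $\mc$ lies entirely in $\cc[h]$ or entirely in $\cc[h]B$. Now a typical element of $\wc$ is
\[
f(h)\Bigl(r(h)+\tfrac{i}{2n}C.r(h)\Bigr)=f(h)r(h)+\tfrac{i}{2n}f(h)\bigl(C.r(h)\bigr),
\]
with $f(h)r(h)\in\cc[h]$ and $\tfrac{i}{2n}f(h)(C.r(h))\in\cc[h]B$. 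Because $r(h)$ is non-constant (Corollary \ref{uni of r(h)}), the polynomial $r(h-2)-r(h+2)$ has leading coefficient $-4n\ne0$, so $C.r(h)=\tfrac12\bigl(r(h-2)-r(h+2)\bigr)B\ne0$; hence for every $f\ne0$ both summands above are nonzero. Consequently $\wc$ contains no nonzero homogeneous vector, so $\bigoplus_{g}\bigl(\wc\cap(\mc)_g\bigr)=0\ne\wc$, i.e. $\wc$ is not graded, and the identical computation gives the same for $\wcc$. Therefore $V=\nc$, which proves $\nc$ is $\zzz_2^2$-graded-simple.

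The content of the statement — and the only place to be careful — is precisely the clash between the two decompositions of $\nc$: as an $\s$-module it splits as $\wc\oplus\wcc$, but this is \emph{not} a grading decomposition. The computation above is what makes this rigorous: each rank-one summand is generated by a vector $r(h)\pm\tfrac{i}{2n}C.r(h)$ whose $\cc[h]$-part and $\cc[h]B$-part are both nonzero, so after multiplying by any nonzero $f(h)$ it still straddles the two $B$-homogeneity classes and is never homogeneous. I do not expect any genuine obstacle beyond this — there is no delicate computation — the one thing to be vigilant about is quoting the submodule list from Theorem \ref{mc not simple} exactly, so that $\{\wc,\wcc,\nc\}$ really is the complete list of nonzero submodules of $\nc$.
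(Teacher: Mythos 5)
Your proof is correct, and its skeleton coincides with the paper's: both reduce graded-simplicity of $\nc$ to the submodule list $\{\wc,\wcc,\nc\}$ established in the proof of Theorem~\ref{mc not simple}, and both obtain gradedness of $\nc$ from its being the unique maximal submodule of the graded module $\mc$. The one genuine difference is how $\wc$ and $\wcc$ are excluded as graded submodules: the paper simply quotes Theorem~\ref{any rank 1} from \cite{BKS} (a torsion-free $\s$-module of rank $1$ cannot be $\zzz_2^2$-graded), whereas you verify directly that these particular subspaces are not graded in $\mc$, by writing out the Pauli components $(\mc)_{(0,0)}=\cc[h^2]$, $(\mc)_{(1,0)}=h\,\cc[h^2]$, $(\mc)_{(0,1)}=\cc[h^2]B$, $(\mc)_{(1,1)}=h\,\cc[h^2]B$ and observing that every nonzero element $f(h)r(h)+\tfrac{i}{2n}f(h)(C.r(h))$ of $\wc$ (and similarly of $\wcc$) has nonzero parts in both $\cc[h]$ and $\cc[h]B$ --- here $C.r(h)\neq 0$ because $r$ is non-constant, so $r(h-2)-r(h+2)$ has leading coefficient $-4n\neq 0$ --- and hence is never homogeneous. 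Your route is self-contained and proves exactly what is needed (non-gradedness of $\wc$ and $\wcc$ with respect to this specific grading) at the cost of a short explicit computation; the paper's route is shorter but imports the stronger external statement that no $\zzz_2^2$-grading on a rank-one torsion-free module exists at all. Both arguments depend equally on the completeness of the submodule classification in Theorem~\ref{mc not simple}, which you correctly flag as the point that must be quoted exactly.
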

\begin{proof}
	Since $ \wc $ and $ \wcc $ are of rank $ 1 $, then they are not graded submodules, see \cite{BKS}, which are the only submodules of $ \nc $. But since $ \nc $ is the unique maximal submodule of the graded module $ \mc $, then $ \nc $ is a graded module which has no graded proper submodules. Hence $ \nc  $ is graded-simple.
\end{proof}
 
\begin{rem}
	   Let $ \lambda\in 2\zzz $, and consider the quotient module  $ V=\mc/\nc =\cc[h]\oplus\cc[h]B/\cc[h]r(h)\oplus\cc[h]r^{*}(h)B $, where $ r^{*}(h)B=C.r(h) $. Since the polynomials $ r(h),\ r^{*}(h) $ have degrees $ n,\ n-1 $, respectively, the module $ V $ is a finite-dimensional $ \s $-module, with $ \dim (V) =2n-1$, hence a weight module. Moreover, since the module $ \nc $ is maximal in $ \mc $,  $ V $ is simple. Using \cite[Theorem 3.32]{Mazorchuk}, we have
	    \[
	    V\cong V(2n-2)=\begin{cases}
    V(\lambda)& \text{ If } \lambda\geq 0,\\
    V(-(\lambda+2))& \text{ If } \lambda<0 .
    \end{cases} 
    \]  
\end{rem}
    
    \section{$ \zzz $-gradings of torsion-free $ \s $-modules of finite rank.}
    In this section, we prove the following.
    
    \begin{thm}
    	Any simple torsion-free $ \s $-module of finite rank is not a $ \zzz $-graded $ \s $-module.
    \end{thm}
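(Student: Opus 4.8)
The plan is to show that a nontrivial $\ZZ$-grading of $\s$ forces a triangular structure, hence every finite‑rank graded module has a lowest nonzero homogeneous component on which one of the two nilpotent generators acts by zero; the Casimir element then produces an eigenvector of $h$, contradicting torsion‑freeness.

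First I would normalize the grading on $\s$. Recall from Section \ref{ssGGS} that, up to equivalence, the only nontrivial $\ZZ$-grading of $\s$ is the Cartan grading $\Gamma^1_{\Sl_2}$ (any $\ZZ$-grading is a coarsening of a fine one, the Pauli grading contributes nothing over $\ZZ$, and coarsenings of the Cartan grading only rescale the degree). Twisting $M$ by an automorphism of $\s$ changes neither simplicity, nor — by Theorem \ref{simple eather weight or free} — the property of being torsion‑free, nor the rank; so we may assume $\s=L_{-1}\oplus L_0\oplus L_1$ with $L_0=\langle h\rangle$, $L_1=\langle x\rangle$, $L_{-1}=\langle y\rangle$, and $M=\bigoplus_{k\in\ZZ}M_k$ with $L_iM_k\subseteq M_{i+k}$.

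Next I would locate a lowest component. Since $h\in L_0$, each $M_k$ is a $\C[h]$-submodule of $M$; as $M$ is torsion‑free of finite rank it is a free $\C[h]$-module of rank $n$, and $\C[h]$ being a PID each $M_k$ is free of some rank $r_k$, with $\sum_k r_k=n$ (extend scalars to the field of fractions). Hence only finitely many $M_k$ are nonzero, and at least one is (as $M\neq 0$). Let $k_0$ be minimal with $M_{k_0}\neq 0$ and pick $0\neq v\in M_{k_0}$; then $y\cdot v\in M_{k_0-1}=0$. By Theorem \ref{ann of simple module} there is $\lambda\in\C$ with $I_\lambda\subseteq\mathrm{Ann}_{U(\s)}(M)$, so the Casimir $c$ acts on $M$ as the scalar $(\lambda+1)^2$; in particular $c\cdot v=(\lambda+1)^2 v$. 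On the other hand, from $c=(h+1)^2+4yx$ and $xy-yx=h$ one gets $c=(h-1)^2+4xy$, whence, using $xy\cdot v=x(y\cdot v)=0$, we obtain $c\cdot v=(h-1)^2 v$. Therefore $\bigl((h-1)^2-(\lambda+1)^2\bigr)v=0$, i.e. $p(h)\cdot v=0$ for the nonzero polynomial $p(t)=(t-1)^2-(\lambda+1)^2\in\C[t]$, contradicting torsion‑freeness of $M$.

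The only step requiring genuine care is the normalization in the second paragraph: one must be sure that replacing $M$ by a twist under an automorphism of $\s$ (equivalently, changing the Cartan subalgebra with respect to which "torsion‑free" and "rank" are defined) is harmless. This follows because all Cartan subalgebras of $\s$ are conjugate and, by Theorem \ref{simple eather weight or free}, a simple $\s$-module is torsion‑free precisely when it is not a weight module — a condition visibly invariant under $\Aut(\s)$ — while the rank is carried along by the induced isomorphism $\C[h]\to\C[\phi(h)]$. Everything else is the short computation above, and note that finiteness of the rank is used only to guarantee the existence of the bottom component $M_{k_0}$.
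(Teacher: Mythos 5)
Your argument is correct and is essentially the paper's: each homogeneous component is a $\C[h]$-submodule, so finite rank forces finitely many nonzero components, an extreme component is annihilated by a root vector, and the scalar action of the Casimir then produces a nonzero polynomial in $h$ killing a nonzero vector, contradicting torsion-freeness. The only differences are cosmetic — you take the lowest component and use $y$ together with the rewriting $c=(h-1)^2+4xy$, while the paper takes the highest component and uses $x$ with $c=(h+1)^2+4yx$ — plus your explicit (and somewhat breezily justified) normalization of the $\ZZ$-grading on $\s$ to the Cartan grading, a point the paper simply leaves implicit.
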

\begin{proof}
	Let $ M $ be any simple torsion-free $ \s $-module of finite rank $ n $. Assume that $ M $ is a $ \zzz $-graded $ \s $-module, that is, $ M=\underset{i\in \zzz}{\bigoplus}M_{i} $. Since $ h $ has degree $ 0 $ in $ U(\s) $, it follows that $ \cc[h]M_{i}\subset M_{i} $ for all $ i\in \zzz $. Hence $ M_{i} $ is also $ \cc[h] $-submodule of some finite rank $ n_{i}\leq n $, which implies that $ M=\underset{i\in I}{\bigoplus}M_{i} $, where $ I=\left\lbrace i\in\zzz\mid M_{i}\neq \left\lbrace 0\right\rbrace \right\rbrace $, where $ \left| I\right| =r  $ for some positive integer $ r\leq n $.  Let $ t\in I $ be maximal. Then $ x.M_{t}\subset M_{t+1}=0$. For any $ 0\neq v\in M_{t} $, then $ x.v=0 $, that is $ yx.v=0 $. Since $ M $ is simple, then the Casimir element $ c $ act as a scalar on $ M $. Hence $ c.v=((h+1)^{2}+4yx).v=(h+1)^{2}.v=\alpha v $, for some $ \alpha \in \cc $, which implies that $ M $ is a torsion module. A contradiction.  
\end{proof}
    \begin{cor}
Let $\lambda\in \cc\setminus 2\zzz$, then the module $ \mc $ is not a $ \zzz $-graded $ \s $-module.
    \end{cor}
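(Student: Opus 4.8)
The plan is to obtain this as an immediate consequence of the two results established just above. First I would record that, by its very construction, $\mc = \ui/\ui C$ is a torsion-free $\s$-module which is free of rank $2$ over $\cc[h]$ (with $\cc[h]$-basis $\{1,B\}$); in particular it has finite rank. Next, observe that the hypothesis $\lambda \in \cc \setminus 2\zzz$ is precisely the hypothesis of Theorem \ref{mc simple}, so under it $\mc$ is a \emph{simple} $\s$-module. Having both of these in hand, the corollary follows directly from the preceding theorem, which asserts that a simple torsion-free $\s$-module of finite rank admits no $\zzz$-grading compatible with the Cartan $\zzz$-grading of $\s$; hence $\mc$ is not a $\zzz$-graded $\s$-module.

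There is no genuine obstacle here: the corollary is nothing more than the specialization of that theorem to the module $\mc$, and the only point worth being explicit about is that ``$\zzz$-graded $\s$-module'' means a grading of $M$ by the universal group $\zzz$ of the Cartan grading, which is the notion fixed earlier in the paper. Note in particular the contrast with the $\zzz_2^2$-situation: $\mc$ \emph{is} $\zzz_2^2$-graded, because $C$ is homogeneous for the Pauli grading and so $\ui C$ is a graded left ideal, whereas $C = x - y$ is not homogeneous for the Cartan grading; the corollary shows that this failure cannot be repaired by \emph{any} choice of $\zzz$-grading on the module.

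If a self-contained argument were preferred, one could simply transcribe the proof of the theorem: assuming $\mc = \bigoplus_{i \in \zzz} M_i$, the fact that $h$ is homogeneous of degree $0$ forces each $M_i$ to be a $\cc[h]$-submodule, so $x$ annihilates the top nonzero component $M_t$; then for $0 \neq v \in M_t$ the Casimir element $c = (h+1)^2 + 4yx$ acts as $(h+1)^2$ on $v$, and since $c$ is a scalar on the simple module $\mc$ this makes $v$ a $\cc[h]$-torsion vector, contradicting torsion-freeness. I would, however, just cite the theorem.
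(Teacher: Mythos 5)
Your proposal is correct and matches the paper's intended argument: the corollary is stated as an immediate consequence of the preceding theorem, using that $\mc$ is torsion-free of finite rank (rank $2$, with $\cc[h]$-basis $\{1,B\}$) and simple by Theorem \ref{mc simple} when $\lambda\in\cc\setminus 2\zzz$. Your optional self-contained transcription of the theorem's proof (top component killed by $x$, Casimir acting as a scalar forcing torsion) is also exactly the paper's argument.
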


\end{document}